\documentclass[11pt, a4paper,twoside]{scrartcl}
\KOMAoptions{DIV=calc}
\usepackage[english]{babel}

\usepackage{color,enumitem,graphicx, caption}
\usepackage[colorlinks=true,urlcolor=blue,citecolor=red,linkcolor=blue,linktocpage,bookmarksnumbered,bookmarksopen]{hyperref}

\usepackage[normalem]{ulem}
\usepackage{amsfonts,amsmath,amssymb,latexsym,soul,amsthm, tikz-cd, braket, mathtools, graphicx,array,color,colortbl,xcolor}


\DeclarePairedDelimiter{\norm}{\lVert}{\rVert}
\usepackage[T1]{fontenc}
\usepackage[utf8]{inputenc}
\usepackage{csquotes}
\usepackage{microtype}
\usepackage{mathrsfs}

\newcommand{\dz}{{\mathrm{d}z}}

\newcommand{\dmu}{{\mathrm{d}\mu}}
\newcommand{\dnu}{{\mathrm{d}\nu}}







\usepackage{fixmath}

\setlength\parindent{0pt}

\newtheorem{lemma}{Lemma}[section]
\newtheorem{thm}[lemma]{Theorem}
\newtheorem{prop}[lemma]{Proposition}
\newtheorem{cor}[lemma]{Corollary}

\theoremstyle{definition}
\newtheorem{defn}[lemma]{Definition}

\numberwithin{equation}{section}

\begin{document}
	\title{Bifurcation and multiplicity results for critical problems involving the $p$-Grushin operator}
	\author{Giovanni Molica Bisci\thanks{This work has been funded by the European Union - NextGenerationEU within the framework of PNRR  Mission 4 - Component 2 - Investment 1.1 under the Italian Ministry of University and Research (MUR) program PRIN 2022 - grant number 2022BCFHN2 - "Advanced theoretical aspects in PDEs and their applications" - CUP: H53D23001960006 and Partially supported by GNAMPA 2024: "Aspetti geometrici e analitici di alcuni problemi locali e non-locali in mancanza di compattezza"} \and Paolo Malanchini \and Simone Secchi\thanks{Partially supported by GNAMPA 2024: "Aspetti geometrici e analitici di alcuni problemi locali e non-locali in mancanza di compattezza"}}
	\date{}
	\maketitle
	\setcounter{secnumdepth}{2}
	\setcounter{tocdepth}{2}
	
\begin{abstract}
\noindent \textbf{Abstract.} In this article we prove a bifurcation and multiplicity result for a critical problem involving a degenerate nonlinear operator $\Delta_\gamma^p$. We extend to a generic $p>1$ a result which was proved only when $p=2$. When $p\ne 2$, the nonlinear operator $-\Delta_\gamma^p$ has no linear eigenspaces, so our extension is nontrivial and requires an abstract critical theorem which is not based on linear subspaces. We also prove a new abstract result based on a pseudo-index related to the $\mathbb{Z}_2$-cohomological index that is applicable here.
We provide a version of the Lions’ Concentration-Compactness Principle for our operator.
\end{abstract}

\section{Introduction}
Suppose that $N \geq 3$, and that $N=m+\ell$ for some positive integers $m$ and $\ell$. Let $z= (x,y)$ be a generic point of $\mathbb{R}^N = \mathbb{R}^m\times\mathbb{R}^\ell$ and let $\gamma \geq 0$ be a real parameter. The Grushin-Baouendi operator is defined as
\begin{equation} \label{grushin}
\Delta_\gamma u (z) = \Delta_x u(z) + \lvert x \rvert^{2\gamma} \Delta_y u (z),
\end{equation}
where $\Delta_x$ and $\Delta_y$ are the Laplace operators in the variables $x$ and $y,$ respectively.

It follows from \eqref{grushin} that the Grushin operator is not uniformly elliptic in the space $\mathbb{R}^N$, since it is degenerate on the subspace $\lbrace 0\rbrace \times \mathbb{R}^\ell$. For more technical reasons, see for instance the survey \cite{egorov}, the Grushin operator belongs to the class of \emph{subelliptic} operators, which lies halfway between elliptic and hyperbolic operators.

As remarked in \cite{kogoj_lanconelli}, if $\gamma$ is a non-negative even integer, the operator $\Delta_\gamma$ falls into the class of H\"{o}rmander operators, which are defined to be a sum of squares of vector fields generating a Lie algebra of maximum rank at any point of $\mathbb{R}^N$.
If we introduce the family of vector fields
\begin{displaymath}
X_i = \frac{\partial}{\partial x_i}, ~ i=1,\dots, n,\quad
 X_{m+j} = \lvert x \rvert^\gamma \frac{\partial}{\partial y_j}, ~ j=1,\dots,\ell
\end{displaymath}
the corresponding Grushin gradient is
\begin{displaymath}
\nabla_\gamma= \left(\nabla_x, \lvert x \rvert^\gamma \nabla_y\right) = \left(\frac{\partial}{\partial x_i}, \dots, \frac{\partial}{\partial x_m}, \lvert x \rvert^\gamma \frac{\partial}{\partial y_1}, \dots, \lvert x \rvert^\gamma \frac{\partial}{\partial y_\ell}\right)
\end{displaymath}
and the Grushin operator can be written as
\begin{displaymath}
	\Delta_\gamma = \sum_{i=1}^{N} X_i^2= \nabla_\gamma \cdot \nabla_\gamma.
\end{displaymath}

In a similar manner to how the Grushin operator can be constructed from the Laplacian, it is possible to define a quasilinear Grushin-type operator based on the $p$-Laplace operator. The $p$-Laplace Grushin operator ($p$-Grushin for short) is defined by
\begin{equation} \label{p-grushin}
	\Delta_\gamma^p u = \nabla_\gamma \cdot \left(\lvert \nabla_\gamma u \rvert^{p-2}\nabla_\gamma u\right).
\end{equation}

The $p$-Grushin operator has obtained increasing attention in recent years. Recent works, such as those by Huang and Yang \cite{huang_p_laplace} and Huang, Ma and Wang \cite{huang_ma_wang}, have explored the existence, uniqueness, and regularity of solutions to the $p$-Laplace equation involving Grushin-type operators. Moreover in \cite{wei_chen_chen_yang} the authors proved a Liouville-type theorem for stable solutions of weighted $p$-Laplace-type Grushin equations.
\medskip

In this paper we are concerned with a critical problem associated to the operator \eqref{p-grushin}. More precisely, we consider the boundary value problem
\begin{equation}
	\label{problem:main}
	\begin{cases}
		 - \Delta_\gamma^p u = \lambda \lvert u \rvert^{p-2} u + \lvert u \rvert^{p^*_\gamma -2}u \quad &\hbox{in $\Omega$},\\
		 u= 0 \quad &\hbox{in $\partial\Omega$},
	\end{cases}
\end{equation}
where $\Omega\subset\mathbb{R}^N$ is bounded and $p^*_\gamma$ is the critical Sobolev exponent for the $p$\nobreakdash-Grushin operator defined in \eqref{pstar} below.

In the seminal paper \cite{brezis_nirenberg} Brezis and Nirenberg considered the problem with Sobolev critical growth
 \begin{equation}
 	\label{problem:brezis_niremberg}
 	\begin{cases}
 		-\Delta u = \lambda u + \lvert u \rvert^{2^*-2} u \quad &\hbox{in $\Omega$},\\
 		u=0 \quad &\hbox{in $\partial\Omega$},
 	\end{cases}
 \end{equation}
 on a bounded domain $\Omega\subset\mathbb{R}^N$.
 Here $N \geq 3$, $2^*=2N/(N-2)$ is the critical Sobolev exponent, and $\lambda >0$ is a positive parameter. By a careful comparison argument, the authors proved that for $N \geq 4$,
 \begin{displaymath}
 	\inf_{\substack{u \in H_0^1(\Omega) \\ \int_\Omega \vert u \vert^{2^*} =1}} \left\lbrace \int_\Omega \vert \nabla u \vert^2 - \lambda \int_\Omega \vert u \vert^2 \right\rbrace < \inf_{\substack{u\in H_0^1(\Omega) \\ \int_\Omega \vert u \vert^{2^*}=1}} \int_\Omega \vert \nabla u \vert^2 = S,
 \end{displaymath}
 where $S$ is the best constant for the continuous Sobolev embedding $H_0^1(\Omega) \subset L^{2^*}(\Omega)$. Calling $\lambda_1$ the first eigenvalue of the operator~$-\Delta$ defined on $H_0^1(\Omega) \subset L^2(\Omega)$, the existence of a positive solution to \eqref{problem:brezis_niremberg} follows for every $\lambda \in (0,\lambda_1)$ if $N \geq 4$. The case $N=3$ is harder, and the authors proved the existence of a positive solution when $\Omega$ is a ball and $\lambda$ is sufficiently close to $\lambda_1$.

 The paper \cite{brezis_nirenberg} marked the beginning of an endless stream of efforts to extend the seminal results. In 1984, Cerami, Fortunato and Struwe proved in \cite[Theorem 1.1]{cerami1984bifurcation} the following multiplicity result for problem \eqref{problem:brezis_niremberg}. Let $0<\lambda_1<\lambda_2\le \lambda_3\le\dots \to+\infty$ be the Dirichlet eigenvalues of $-\Delta$ in $\Omega$. If $\lambda_k\le\lambda<\lambda_{k+1}$ and
 \begin{displaymath}
 \lambda>\lambda_{k+1} - \frac{S}{\lvert \Omega \rvert^{2/N}},
 \end{displaymath}
 then \eqref{problem:brezis_niremberg} has $m$ distinct pairs of nontrivial solutions $\pm u_j^\lambda$, where $m$ is the multiplicity of the eigenvalue $\lambda_{k+1}$.

 Similar multiplicity and bifurcation results have been proved for several classes of variational operators. For instance, the authors considered in \cite{fiscella2018multiplicity} a rather general family of fractional (i.e. nonlocal) operators which contains the usual fractional Laplace operator in $\mathbb{R}^N$.

Subsequently,  a multiplicity result has been proved in \cite{perera_multiplicity} for the corresponding problem for the $p$-Laplacian
\begin{displaymath}
	\begin{cases}
		- \Delta_p u = \lambda \lvert u \rvert^{p-2} u + \lvert u \rvert^{p^* -2}u \quad &\hbox{in $\Omega$},\\
		u= 0 \quad &\hbox{in $\partial\Omega$},
	\end{cases}
\end{displaymath}
where $\Delta_p u = \nabla\cdot \left(\lvert \nabla u \rvert^{p-2}\nabla u\right)$ is the $p$-Laplace operator and $p^* = Np/(N-p)$.
\medskip

Finally, the authors in \cite{perera_multiplicity_fractional} extended the same multiplicity result for a critical problem involving the fractional $p$-Laplace operator.
\medskip

The case $p=2$ of \eqref{problem:main} was studied in \cite{loiudice_brezis}, where the authors extended the Brezis-Nirenberg result to the critical problem with the Grushin operator. A multiplicity result for the case $p=2$ was proved in \cite{bisci2024note}.
\medskip

In this paper we extend the above bifurcation and multiplicity results to the problem~\eqref{problem:main}. This extension requires some care. Indeed, the linking argument bases on the eigenspace of $-\Delta_\gamma$ in \cite{bisci2024note} does not work when $p\ne 2$ since the nonlinear operator $-\Delta_\gamma^p$ does not have linear eigenspaces. We will use a different sequence of eigenvalues that is based on the $\mathbb{Z}_2$-cohomological index of Fadell and Rabinowitz \cite{fadell_rabinowitz}.
\medskip

In what follows, $S$ stands for the optimal constant for the Sobolev embedding to be defined in \eqref{eq:sobconst}, and $\lvert\Omega\rvert$ denotes the Lebesgue measure of a bounded domain $\Omega\subset\mathbb{R}^N$.

If we denote by $\lbrace \lambda_k\rbrace$ the sequence of eigenvalues of the $p$\nobreakdash-Grushin operator introduced in Section \ref{sec:eigenvalues}, we can state the main result of this paper.

\begin{thm}
	\label{thm:main}
The following facts hold:
\begin{enumerate}
	\item[$(1)$] If
	\begin{displaymath}
	\lambda_1 - \frac{S}{\lvert \Omega \rvert^{p/N_\gamma}}<\lambda <\lambda_1,
	\end{displaymath}
	then problem \eqref{problem:main} has a pair of nontrivial solutions $\pm u^\lambda$ such that $u^\lambda\to 0$ as $\lambda\nearrow \lambda_1$.
	\item[$(2)$] If $\lambda_k\le \lambda <\lambda_{k+1} = \dots = \lambda_{k+m}<\lambda_{k+m+1}$ for some $k,m\in\mathbb{N}$ and
	\begin{equation}
		\label{eq:lambdak}
		\lambda > \lambda_{k+1} - \frac{S}{\lvert \Omega \rvert^{p/N_\gamma}},
	\end{equation}
	then problem \eqref{problem:main} has $m$ distinct pairs of nontrivial solutions $\pm u^\lambda_j, \, j=1,\dots, m,$ such that $u^\lambda_j\to 0$ as $\lambda \nearrow \lambda_{k+1}$.
	\end{enumerate}
\end{thm}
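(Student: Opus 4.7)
The plan is to apply the pseudo-index abstract critical point theorem announced in the abstract, which is built on the $\mathbb{Z}_2$-cohomological index of Fadell-Rabinowitz, to the $C^1$ energy functional
\begin{displaymath}
J_\lambda(u) = \frac{1}{p}\int_\Omega \lvert \nabla_\gamma u \rvert^p\, dz - \frac{\lambda}{p}\int_\Omega \lvert u \rvert^p\, dz - \frac{1}{p^*_\gamma}\int_\Omega \lvert u \rvert^{p^*_\gamma}\, dz
\end{displaymath}
on the natural Grushin-Sobolev space associated with $\Omega$. The critical points of $J_\lambda$ are precisely the weak solutions of \eqref{problem:main}; the Lions-type concentration-compactness principle for the $p$-Grushin operator proved earlier in the paper supplies the Palais-Smale condition $(PS)_c$ for every level $c$ strictly below the threshold $c^\ast := \frac{1}{N_\gamma}\, S^{N_\gamma/p}$.

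For part (1), when $\lambda < \lambda_1$ the variational characterization of $\lambda_1$ yields the Poincar\'{e}-type inequality that produces mountain-pass geometry at the origin, so the mountain-pass level $c(\lambda)$ is well defined and positive. I would then test $c(\lambda)$ against a rescaled extremal $u_\varepsilon$ of the $p$-Grushin-Sobolev embedding (whose existence and asymptotic expansion come from the concentration-compactness framework) and combine the resulting upper bound with the elementary inequality $\lvert u \rvert_p^p \le \lvert \Omega \rvert^{p/N_\gamma}\lvert u \rvert_{p^*_\gamma}^p$ to obtain $c(\lambda) < c^\ast$ precisely when $\lambda > \lambda_1 - S/\lvert \Omega\rvert^{p/N_\gamma}$. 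The $(PS)_{c(\lambda)}$ condition then delivers the nontrivial pair $\pm u^\lambda$, and $c(\lambda) \to 0$ as $\lambda \nearrow \lambda_1$ forces $u^\lambda \to 0$.

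For part (2) the strategy follows the pattern of \cite{cerami1984bifurcation}, but the symmetric linking over a linear eigenspace must be replaced by a pseudo-index argument since $-\Delta_\gamma^p$ has no linear eigenspaces. Letting $\mathcal{M}$ be the unit sphere of the Grushin-Sobolev space, I would introduce
\begin{displaymath}
c_j := \inf_{A \in \mathcal{A}_j}\, \sup_{u \in A} J_\lambda(u),
\end{displaymath}
where $\mathcal{A}_j$ is the family of closed symmetric subsets of $\mathcal{M}$ whose pseudo-index, relative to a cohomological-index datum associated with a sublevel set of the auxiliary functional $u \mapsto \int \lvert \nabla_\gamma u\rvert^p - \lambda_k\int \lvert u \rvert^p$, is at least $j$. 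The monotonicity of the cohomological index together with the min-max characterization of the sequence $\{\lambda_k\}$ from Section \ref{sec:eigenvalues} yields a strictly positive lower bound for $c_{k+1}\le \dots \le c_{k+m}$, and the abstract pseudo-index theorem then asserts that every such $c_j$ lying below $c^\ast$ is a critical value of $J_\lambda$; if several of the $c_j$ coincide, the corresponding critical set has positive cohomological index and is therefore infinite, yielding the multiplicity of pairs.

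The main obstacle is the strict inequality $c_{k+m} < c^\ast$ under hypothesis \eqref{eq:lambdak}. I would prove it by building a continuous family of test functions of the form $u + t\, u_\varepsilon$, where $u$ varies in a symmetric set of cohomological index $k+m$ constructed from the first $k+m$ levels of the $p$-Grushin eigenvalue filtration and $u_\varepsilon$ is a concentrating Grushin-Sobolev extremal whose support may be arranged, for small $\varepsilon$, to be essentially disjoint from that of $u$. A Clarkson-type expansion of $J_\lambda(u + t u_\varepsilon)$, with cross terms quantitatively controlled by pointwise inequalities valid for $p \ne 2$, combined with $\lvert \cdot \rvert_p^p \le \lvert \Omega\rvert^{p/N_\gamma}\lvert \cdot \rvert_{p^*_\gamma}^p$, uses the assumption $\lambda > \lambda_{k+1} - S/\lvert \Omega \rvert^{p/N_\gamma}$ to absorb the $\lambda$-dependent contribution of $u$ into the $p^*_\gamma$-term and deliver the required strict bound. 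This cross-term control is the technically most delicate point, since the loss of the Hilbertian orthogonal decomposition available when $p=2$ forces one to replace the quadratic splitting of \cite{bisci2024note} with sharper Simon-Clarkson-type estimates. Once $c_{k+m} < c^\ast$ is established, the pseudo-index theorem yields $m$ distinct pairs $\pm u^\lambda_j$ of critical points, and a uniform upper bound on $c_j(\lambda)$ in the limit $\lambda \nearrow \lambda_{k+1}$ forces $u^\lambda_j \to 0$.
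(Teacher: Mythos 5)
Your overall architecture (energy functional $J_\lambda=I_\lambda$, a pseudo-index scheme built on the Fadell--Rabinowitz index, and the Palais--Smale condition below $S^{N_\gamma/p}/N_\gamma$ obtained from concentration-compactness) matches the paper. But the way you propose to get the crucial level estimate is not the paper's argument and, as written, would not go through. You want to push the minimax levels below the threshold by superposing a concentrating extremal $u_\varepsilon$ of the Grushin--Sobolev embedding onto the test set and controlling cross terms by Clarkson/Simon inequalities. The concentration-compactness principle does \emph{not} supply such extremals or their asymptotic expansion; for the $p$-Grushin operator with general $p\neq 2$ and $\gamma>0$ no explicit minimizers of \eqref{eq:sobconst} with known decay are available, and the paper never uses any. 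Moreover, the specific hypothesis $\lambda>\lambda_{k+1}-S/\lvert\Omega\rvert^{p/N_\gamma}$ is exactly what falls out of the elementary route the paper actually takes: for $u$ in the cone $X$ over $R A_0$ with $A_0\subset\Psi^{\lambda_{k+1}}$ one has $\|u\|_{\gamma,p}^p\le\lambda_{k+1}\|u\|_p^p$, hence
\begin{displaymath}
I_\lambda(u)\le \frac{\lambda_{k+1}-\lambda}{p}\int_\Omega|u|^p\,\dz-\frac{1}{p^*_\gamma\,\lvert\Omega\rvert^{p^*_\gamma/N_\gamma}}\Bigl(\int_\Omega|u|^p\,\dz\Bigr)^{p^*_\gamma/p}\le\frac{\lvert\Omega\rvert}{N_\gamma}(\lambda_{k+1}-\lambda)^{N_\gamma/p},
\end{displaymath}
which is $<S^{N_\gamma/p}/N_\gamma$ precisely under \eqref{eq:lambdak}. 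No splitting $u+tu_\varepsilon$ and no cross-term analysis are needed; your proposed Brezis--Nirenberg-type expansion would (if it could be carried out at all) aim at a different, $\lambda$-independent range and does not produce the stated one.

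The second gap is the construction of the admissible set itself. The abstract theorem requires a \emph{compact} symmetric set $A_0\subset S_1$ with $i(A_0)\ge k+m$, whereas the natural candidate $\Psi^{\lambda_{k+m}}$ of index $k+m$ is not compact, and "the first $k+m$ levels of the eigenvalue filtration" do not span linear subspaces when $p\ne 2$. The paper resolves this with a specific device (Lemma \ref{lemma:1} and Proposition \ref{prop:compact}): the solution operator $J$ of $-\Delta_\gamma^p u=|w|^{p-2}w$ is continuous from $L^p(\Omega)$ into $\mathring{W}^{1,p}_\gamma(\Omega)$, the projected sublevel set is compact in $L^p(\Omega)$ by the compact embedding, and its image under $\pi\circ\pi_p\circ J$ is a compact symmetric subset of $\Psi^{\lambda_{k+m}}$ with the same index. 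Your proposal does not address this compactness issue, and without it the pseudo-index minimax over compact sets is not even known to be nonempty at level $k+m$. Finally, for part (1) the same remark applies: the paper does not run a mountain-pass argument with concentrating bubbles; it is the $k=0$, $m=1$ instance of the same Hölder estimate on the segment through (a compact set containing) the normalized first eigenfunction.
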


In particular, we have the following existence result.
\begin{cor}
	Problem \eqref{problem:main} has a nontrivial solution for all
	\begin{equation*}
		\lambda\in \bigcup_{k=1}^\infty \left(\lambda_k - S/\lvert \Omega \rvert^{p/N_\gamma}, \lambda_{k} \right).
	\end{equation*}
\end{cor}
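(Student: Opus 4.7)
The plan is to deduce this corollary directly from Theorem~\ref{thm:main} by a case analysis on the position of $\lambda$ in the eigenvalue spectrum. The only ingredient beyond the theorem is the monotonicity of the eigenvalue sequence $\lbrace\lambda_k\rbrace$ of the $p$-Grushin operator introduced in Section~\ref{sec:eigenvalues}. Fix an arbitrary $\lambda$ in the union, so that $\lambda\in\left(\lambda_k - S/\lvert\Omega\rvert^{p/N_\gamma},\lambda_k\right)$ for some integer $k\ge 1$.

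In the first subcase, suppose $\lambda<\lambda_1$. Because $\lambda_k\ge\lambda_1$ by monotonicity, we immediately obtain
\[
	\lambda > \lambda_k - \frac{S}{\lvert\Omega\rvert^{p/N_\gamma}} \ge \lambda_1 - \frac{S}{\lvert\Omega\rvert^{p/N_\gamma}},
\]
so the hypothesis of part $(1)$ of Theorem~\ref{thm:main} is satisfied and a pair $\pm u^\lambda$ of nontrivial solutions exists.

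In the second subcase, suppose $\lambda\ge\lambda_1$, and let $j\ge 1$ be the largest index with $\lambda_j\le\lambda$; denote by $m$ the multiplicity of $\lambda_{j+1}$, so that $\lambda_j\le\lambda<\lambda_{j+1}=\dots=\lambda_{j+m}<\lambda_{j+m+1}$. Since $\lambda<\lambda_k$ and the eigenvalues are non-decreasing, the indices satisfy $j+1\le k$, hence $\lambda_{j+1}\le\lambda_k$ and therefore
\[
	\lambda > \lambda_k - \frac{S}{\lvert\Omega\rvert^{p/N_\gamma}} \ge \lambda_{j+1} - \frac{S}{\lvert\Omega\rvert^{p/N_\gamma}}.
\]
This is precisely condition~\eqref{eq:lambdak} with $k$ replaced by $j$, so part~$(2)$ of Theorem~\ref{thm:main} applies and yields $m\ge 1$ pairs of nontrivial solutions, in particular at least one. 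No substantive obstacle is anticipated: the analytic content is entirely carried by Theorem~\ref{thm:main}, and what remains is a purely combinatorial bookkeeping argument on the spectrum.
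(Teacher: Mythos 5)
Your argument is correct and is precisely the routine deduction that the paper leaves implicit (the corollary is stated without proof as an immediate consequence of Theorem \ref{thm:main}): the case split on $\lambda<\lambda_1$ versus $\lambda\ge\lambda_1$, the choice of the largest $j$ with $\lambda_j\le\lambda$, and the observation $k\ge j+1$ (hence $\lambda_k\ge\lambda_{j+1}$) all check out. Nothing is missing.
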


The paper is organized as follows. In Section \ref{sec:functional_setting}, we introduce the functional space $\mathring{W}^{1,p}_\gamma(\Omega)$.
	In Section \ref{sec:concentration_compactness}, we prove a Concentration-Compactness Principle for the $p$-Grushin operator.
	Section \ref{sec:eigenvalues} focuses on the eigenvalues of the operator $-\Delta_p$.
Section \ref{sec:abstract} recalls an abstract multiplicity result, which plays a key role in the proof of Theorem \ref{thm:main}, presented in Section \ref{sec:proof}.

\section{Functional setting}
\label{sec:functional_setting}
Fix a bounded domain $\Omega$ and a number $1<p<\infty$. The Sobolev space $\mathring{W}^{1,p}_\gamma(\Omega)$ is defined as the completion of $C_c^1(\Omega)$ with respect to the norm
\begin{displaymath}
\norm{u}_{\gamma,p} = \left(\int_\Omega \lvert \nabla_\gamma u \rvert^{p}\,\dz\right)^{1/p}.
\end{displaymath}
Specially, the space $\mathring{W}^{1,2}_\gamma(\Omega) = \mathscr{H}(\Omega)$ is a Hilbert space endowed with the inner product
\begin{displaymath}
	\braket{u,v}_\gamma = \int_{\Omega}^{} \nabla_\gamma u \nabla_\gamma v\, \, \dz.
\end{displaymath}

The following embedding result was proved in \cite[Proposition 3.2 and Theorem 3.3]{kogoj_lanconelli}. See also \cite[Corollary 2.11]{huang_p_laplace}.
\begin{prop}
	Let $\Omega\subset\mathbb{R}^N$ be a bounded open set. Then the embedding
	\begin{displaymath}
		\mathring{W}^{1,p}_\gamma(\Omega) \hookrightarrow L^q(\Omega)
	\end{displaymath}
	is compact for every $q\in [1,p^*_\gamma)$, where
	\begin{equation}
		\label{pstar}
		p^*_\gamma = \frac{p N_\gamma}{N_\gamma - p}
	\end{equation}
	and
	\begin{gather*}
		N_\gamma = m + (1+\gamma) \ell
	\end{gather*}
	is the homogeneous dimension of $\mathbb{R}^N$ associated to the decomposition $\mathbb{R}^N = \mathbb{R}^m\times \mathbb{R}^\ell$.
\end{prop}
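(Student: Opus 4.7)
The plan is to first establish the continuous embedding $\mathring{W}^{1,p}_\gamma(\Omega)\hookrightarrow L^{p^*_\gamma}(\Omega)$ and then upgrade it to compactness for $q\in[1,p^*_\gamma)$ via a splitting that isolates the degenerate set $\{x=0\}$.

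For the continuous Sobolev-type embedding, the natural scaling of the Grushin structure is given by the anisotropic dilations $\delta_\lambda(x,y)=(\lambda x,\lambda^{1+\gamma}y)$, whose Jacobian is $\lambda^{N_\gamma}$ and under which $\lvert\nabla_\gamma u\rvert$ transforms with factor $\lambda^{-1}$. A scaling argument forces $p^*_\gamma = pN_\gamma/(N_\gamma-p)$ as the only exponent for which a homogeneous inequality $\|u\|_{L^{p^*_\gamma}(\mathbb{R}^N)}\le C\|\nabla_\gamma u\|_{L^p(\mathbb{R}^N)}$ can hold. I would prove such an inequality for $u\in C_c^1(\mathbb{R}^N)$ along the lines of \cite{kogoj_lanconelli, huang_p_laplace}: a representation formula based on the fundamental solution of the Grushin Laplacian, combined with a Hardy–Littlewood–Sobolev type estimate relative to the Carnot–Carathéodory distance associated to the vector fields $X_1,\dots,X_N$. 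Extension by zero yields the inequality on $\mathring{W}^{1,p}_\gamma(\Omega)$, and Hölder's inequality on the bounded set $\Omega$ then gives continuous embeddings into $L^q(\Omega)$ for every $q\in[1,p^*_\gamma]$.

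For compactness, let $(u_n)$ be bounded in $\mathring{W}^{1,p}_\gamma(\Omega)$; by reflexivity and the continuous embedding just obtained, after extraction $u_n\rightharpoonup u$ weakly and $\|u_n\|_{L^{p^*_\gamma}}$ stays bounded. Fix $\varepsilon>0$ and set $\Omega^\varepsilon:=\{z=(x,y)\in\Omega:\lvert x\rvert\le\varepsilon\}$. On the complement $\Omega\setminus\Omega^\varepsilon$ the pointwise comparison $\lvert\nabla_\gamma u\rvert^{p}\ge \min(1,\varepsilon^{p\gamma})\lvert\nabla u\rvert^{p}$ holds, so $(u_n)$ is bounded in the classical $W^{1,p}(\Omega\setminus\Omega^\varepsilon)$ and the Euclidean Rellich–Kondrachov theorem, together with the inequality $p^*_\gamma\le p^*$ stemming from $N_\gamma\ge N$, gives strong $L^q$-convergence on $\Omega\setminus\Omega^\varepsilon$ for every $q<p^*_\gamma$. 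On $\Omega^\varepsilon$, whose Lebesgue measure vanishes as $\varepsilon\to0$, Hölder's inequality gives
\begin{equation*}
\|u_n-u\|_{L^q(\Omega^\varepsilon)}\le \lvert\Omega^\varepsilon\rvert^{1/q-1/p^*_\gamma}\,\|u_n-u\|_{L^{p^*_\gamma}(\Omega)},
\end{equation*}
which is arbitrarily small uniformly in $n$ for $\varepsilon$ small. A standard diagonal subsequence then converges strongly in $L^q(\Omega)$.

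The main obstacle is the Sobolev inequality with the sharp critical exponent $p^*_\gamma$: the Grushin vector fields do not commute when $\gamma>0$, so the classical convolution-with-Riesz-potential argument is unavailable, and one must instead rely either on the explicit structure of the fundamental solution of $\Delta_\gamma$ (well understood for integer $\gamma$ via Hörmander's theorem) or on Poincaré–Sobolev inequalities on metric balls in the control distance for general real $\gamma\ge 0$, the latter being the route followed in the cited references. Once that cornerstone is in place, both the continuous chain of embeddings and the compactness argument above reduce to comparatively routine manipulations.
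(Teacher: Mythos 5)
The paper offers no proof of this proposition: it is imported verbatim from \cite[Proposition 3.2 and Theorem 3.3]{kogoj_lanconelli} and \cite[Corollary 2.11]{huang_p_laplace}, so there is no in-paper argument to compare against. Your proposal is nevertheless a sound and essentially standard route, and the part you actually carry out --- upgrading the continuous critical embedding to compactness below $p^*_\gamma$ --- is correct. The anisotropic dilations $\delta_\lambda(x,y)=(\lambda x,\lambda^{1+\gamma}y)$ do have Jacobian $\lambda^{N_\gamma}$ and do single out $p^*_\gamma=pN_\gamma/(N_\gamma-p)$ (under the standing assumption $p<N_\gamma$, which the proposition tacitly needs); and on $\{\lvert x\rvert>\varepsilon\}$ the bound $\lvert\nabla_\gamma u\rvert^p\ge\min(1,\varepsilon^{p\gamma})\lvert\nabla u\rvert^p$ is exactly right, while $\lvert\Omega^\varepsilon\rvert$ is of order $\varepsilon^m$, so the Hölder estimate on the degenerate layer is uniform in $n$ and the diagonal extraction closes the argument. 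Two points deserve tightening. First, Rellich--Kondrachov requires some regularity of the domain, and $\Omega\setminus\Omega^\varepsilon$ can be badly irregular for a general bounded open $\Omega$; since elements of $\mathring{W}^{1,p}_\gamma(\Omega)$ extend by zero (being limits of $C^1_c(\Omega)$ functions), you should apply the Euclidean compact embedding on $B_R\cap\{\lvert x\rvert>\varepsilon\}$ for a ball $B_R\supset\overline{\Omega}$, which is a Lipschitz domain, rather than on $\Omega\setminus\Omega^\varepsilon$ itself. Second, the entire construction hinges on the continuous embedding $\mathring{W}^{1,p}_\gamma(\Omega)\hookrightarrow L^{p^*_\gamma}(\Omega)$, which you outline but defer to the fundamental-solution/Hardy--Littlewood--Sobolev or metric Poincar\'e--Sobolev machinery of the references; this is legitimate (the paper does exactly the same), but you should be explicit that for non-integer $\gamma$ the vector fields are not smooth and H\"ormander's theorem is unavailable, so one must invoke the versions of these estimates proved for arbitrary real $\gamma\ge 0$ as in \cite{kogoj_lanconelli}. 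With these caveats your argument is complete and arguably more informative than the bare citation in the paper.
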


We may thus define the best constant of the Sobolev embedding $\mathring{W}^{1,p}_\gamma(\Omega)\hookrightarrow L^{p^*_\gamma}(\Omega)$
\begin{equation}
	\label{eq:sobconst}
	S = \inf_{\substack{ u \in \mathring{W}^{1,p}_\gamma(\Omega) \\ u \neq 0}} \frac{\int_\Omega \left\vert \nabla_\gamma u \right\vert^p \, \dz}{\left( \int_\Omega \left\vert u \right\vert^{p_\gamma^*} \, \dz \right)^{p/p_\gamma^*}}.
\end{equation}



\section{Concentration-Compactness}
\label{sec:concentration_compactness}
In this section we adapt the arguments from \cite{lions,lions2} and \cite[Theorem 4.8]{struwe} to establish a concentration-compactness result for the $p$\nobreakdash-Grushin operator.

Let $\mathcal{M}(\mathbb{R}^N)$ be the space of all finite signed Radon measures. We recall the definition of \emph{tight convergence of measures}.
\begin{defn}
	A sequence of measures $\{\mu_n\}\subset\mathcal{M}(\mathbb{R}^N)$ converges tightly to a measure $\mu\in\mathcal{M}(\mathbb{R}^N)$, that is  $\mu_n\buildrel\ast\over\rightharpoonup\mu$, if for every $\varphi\in C_b(\mathbb{R}^N)$
	\begin{displaymath}
		\int_{\mathbb{R}^N} \varphi \,\mathrm{d}\mu_n \to 	\int_{\mathbb{R}^N} \varphi\,\dmu \quad \hbox{as $n\to\infty$},
	\end{displaymath}
	where $C_b(\mathbb{R}^N)$ is the space of the bounded, continuous functions on $\mathbb{R}^N$.
\end{defn}
We will also need the following lemma, see \cite[Lemma 1.2 and Remark 1.5]{lions}.
\begin{lemma}
	\label{lemma:lions}
	Let $\mu,\nu$ two bounded non-negative measures on $\Omega$ satisfying for some constant $C_0\ge 0$
	\begin{displaymath}
		\left(\int_{\Omega} \lvert \varphi \rvert^q\,\dnu\right)^{1/q}	\le C_0 \left(\int_{\Omega}\lvert \varphi\rvert^p\,\dmu \right)^{1/p} \quad \forall \varphi\in C^\infty_c(\Omega)
	\end{displaymath}
	where $1\le p\le q\le +\infty$. Then, there exist an at most countable set $J$, families $\lbrace x_j\rbrace_{j\in J}$ of distinct points in $\overline{\Omega}$ and positive numbers $\lbrace \nu_j\rbrace_{j\in J}$ such that
	\begin{displaymath}
		\nu= \sum_{j\in J} \nu_j \delta_{x_j}, \quad \mu \ge C_0^{-p} \sum_{j\in J} \nu_j^{p/q}\delta_{x_j}.
	\end{displaymath}
\end{lemma}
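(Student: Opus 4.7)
The plan is first to upgrade the given functional inequality from $C^\infty_c(\Omega)$ test functions to characteristic functions of arbitrary Borel sets. Using inner/outer regularity of the finite Radon measures $\mu$ and $\nu$, combined with mollification and a cutoff, one can find smooth $\varphi_n$ with $0\le\varphi_n\le 1$ converging to $\chi_B$ simultaneously in $L^p(\mu)$ and $L^q(\nu)$; passing to the limit yields the pointwise bound
\[
\nu(B)^{1/q}\le C_0\,\mu(B)^{1/p},\qquad\text{i.e.}\qquad \nu(B)\le C_0^q\,\mu(B)^{q/p},
\]
for every Borel set $B\subset\Omega$. Since $q/p\ge 1$, this in particular forces $\nu\ll\mu$, and the Radon--Nikodym theorem supplies a nonnegative measurable density $f$ with $\dnu = f\,\dmu$.

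Next I would invoke the Lebesgue differentiation theorem for the Radon measure $\mu$: at $\mu$-a.e.\ point $x$,
\[
f(x)=\lim_{r\to 0^+}\frac{\nu(B_r(x))}{\mu(B_r(x))}\le C_0^q\,\lim_{r\to 0^+}\mu(B_r(x))^{q/p-1}.
\]
In the main case $q>p$, the right-hand limit vanishes unless $\mu(\{x\})>0$, so $f$ is supported on the atoms of $\mu$. Since $\mu$ is finite, the set of atoms is at most countable, say $\{x_j\}_{j\in J}\subset\overline{\Omega}$. Setting $\nu_j:=\nu(\{x_j\})$ and discarding indices with $\nu_j=0$ produces the decomposition $\nu=\sum_{j\in J}\nu_j\delta_{x_j}$ with distinct atoms and $\nu_j>0$.

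To obtain the second conclusion, I specialize the Borel bound $\nu(B)\le C_0^q\mu(B)^{q/p}$ to the singletons $B=\{x_j\}$, giving $\nu_j\le C_0^q\mu(\{x_j\})^{q/p}$, equivalently $\mu(\{x_j\})\ge C_0^{-p}\nu_j^{p/q}$. Since the $x_j$ are pairwise distinct and $\mu$ is nonnegative,
\[
\mu\ \ge\ \sum_{j\in J}\mu(\{x_j\})\,\delta_{x_j}\ \ge\ C_0^{-p}\sum_{j\in J}\nu_j^{p/q}\,\delta_{x_j},
\]
which is the lower bound claimed.

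The main technical obstacle is the bootstrapping step from smooth test functions to characteristic functions of Borel sets: the approximants must be controlled simultaneously in $L^p(\mu)$ and $L^q(\nu)$, which requires a careful combination of Radon regularity and dominated convergence. Everything thereafter is a clean application of Radon--Nikodym together with Lebesgue differentiation, plus a short combinatorial step to isolate the countable family of atoms. The degenerate case $C_0=0$ forces $\nu=0$ trivially and may be set aside at the outset.
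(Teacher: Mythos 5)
Your argument is correct and is essentially the standard proof of the cited result (Lions, Lemma~1.2): upgrade the reverse H\"older inequality to $\nu(B)\le C_0^{q}\,\mu(B)^{q/p}$ for Borel sets via inner/outer regularity of the finite Radon measures, deduce $\nu\ll\mu$, and use Besicovitch differentiation to show the Radon--Nikodym density vanishes $\mu$-a.e.\ off the (at most countable) atom set of $\mu$, after which the lower bound on $\mu$ follows by evaluating the Borel inequality on singletons. The only caveat, which you already flag implicitly by restricting to ``the main case $q>p$,'' is that the differentiation step genuinely requires $q>p$ --- for $p=q$ the conclusion is false (take $\mu=\nu$ equal to Lebesgue measure) --- so the hypothesis ``$1\le p\le q$'' in the statement should be read as $p<q$, as in Lions' original lemma.
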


Now we are ready to prove the Concentration-Compactness Principle.
\begin{thm}
	\label{thm:concentration}
	Let $\Omega$ be a bounded subset in $\mathbb{R}^N$ and $\lbrace u_n\rbrace$ be a bounded sequence in $\mathring{W}^{1,p}_\gamma(\Omega)$. Then we have:
	\begin{enumerate}
		\item[$(1)$] up to a subsequence, there exist $u\in \mathring{W}^{1,p}_\gamma(\Omega)$, two bounded non-negative measures $\mu$ and $\nu$, an at most countable set $J$, a family $\lbrace x_j\rbrace_{j\in J}$ of distinct points in $\overline{\Omega}$ and a family $\lbrace \nu_j \rbrace_{j\in J}$ of positive numbers such that
		\begin{align}
			&\notag u_n \rightharpoonup u~ \text{in}~ \mathring{W}^{1,p}_\gamma(\Omega),\\
			&\label{eq:cc1} \mu_n:= \lvert \nabla_\gamma u_n\rvert^p\dz \buildrel\ast\over\rightharpoonup \mu, \quad \nu_n:= \lvert u_n\rvert^{p^*_\gamma}\dz \buildrel\ast\over\rightharpoonup \nu,\\
			& \label{eq:cc2} \nu = \lvert u \rvert^{p^*_\gamma}\dz + \sum_{j\in J} \nu_j \delta_{x_j},
		\end{align}
		where $\delta_x$ is the Dirac measure concentrated at $x\in\mathbb{R}^N$.
		\item[$(2)$] In addition we have
		\begin{equation}
			\label{eq:cc3} \mu\ge \lvert \nabla_\gamma u\rvert^p\dz + \sum_{j\in J}\mu_j \delta_{x_j}
		\end{equation}
		for some family of positive numbers $\lbrace \mu_j\rbrace_{j\in J}$ satisfying
		\begin{equation}
			\label{eq:cc4} \mu_j \ge S \nu_j^{\frac{p}{p^*_\gamma}}\quad\hbox{for all $j\in J$},
		\end{equation}
		where $S$ is defined in \eqref{eq:sobconst}. In particular,
		\begin{displaymath}
			\sum_{j\in J} (\nu_j)^{\frac{p}{p^*_\gamma}}<\infty.
		\end{displaymath}
	\end{enumerate}
\end{thm}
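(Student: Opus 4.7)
The plan is to follow the classical Lions strategy adapted to the Grushin framework, with the refinements needed when $p\ne 2$. The sequence $\{u_n\}$ is bounded in $\mathring{W}^{1,p}_\gamma(\Omega)$, hence bounded in $L^{p^*_\gamma}(\Omega)$ by the Sobolev embedding, so the measures $\mu_n,\nu_n$ have uniformly bounded total variation. Reflexivity, Banach--Alaoglu and the compact embedding $\mathring{W}^{1,p}_\gamma(\Omega)\hookrightarrow L^p(\Omega)$ yield a common subsequence along which $u_n\rightharpoonup u$ in $\mathring{W}^{1,p}_\gamma(\Omega)$, $u_n\to u$ in $L^p(\Omega)$ and almost everywhere, and $\mu_n\buildrel\ast\over\rightharpoonup\mu$, $\nu_n\buildrel\ast\over\rightharpoonup\nu$.

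Setting $v_n:=u_n-u$, the Brezis--Lieb lemma gives $|u_n|^{p^*_\gamma}-|v_n|^{p^*_\gamma}-|u|^{p^*_\gamma}\to 0$ in $L^1(\Omega)$, so $\tilde\nu:=\nu-|u|^{p^*_\gamma}\dz$ is a finite non-negative measure and coincides with the weak-$*$ limit of $|v_n|^{p^*_\gamma}\dz$; along a further subsequence let $\tilde\mu$ denote the weak-$*$ limit of $|\nabla_\gamma v_n|^p\dz$. For $\varphi\in C_c^\infty(\mathbb{R}^N)$, applying the Sobolev inequality \eqref{eq:sobconst} to $\varphi v_n\in\mathring{W}^{1,p}_\gamma(\Omega)$ and expanding $\nabla_\gamma(\varphi v_n)=\varphi\nabla_\gamma v_n+v_n\nabla_\gamma\varphi$, the elementary inequality $\bigl||a+b|^p-|a|^p\bigr|\le C_p(|a|^{p-1}|b|+|b|^p)$ together with Hölder reduces the right-hand side to $\int|\varphi|^p|\nabla_\gamma v_n|^p\dz+o(1)$, since $v_n\to 0$ in $L^p_{\mathrm{loc}}$ and $\nabla_\gamma\varphi$ is bounded. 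Passing to the limit yields
\[
S^{1/p}\left(\int|\varphi|^{p^*_\gamma}\,\mathrm{d}\tilde\nu\right)^{1/p^*_\gamma}\le\left(\int|\varphi|^p\,\mathrm{d}\tilde\mu\right)^{1/p},
\]
and Lemma \ref{lemma:lions} with $C_0=S^{-1/p}$, $q=p^*_\gamma$ produces the atomic decomposition $\tilde\nu=\sum_{j\in J}\nu_j\delta_{x_j}$ and $\tilde\mu\ge S\sum_{j\in J}\nu_j^{p/p^*_\gamma}\delta_{x_j}$, proving \eqref{eq:cc2}.

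For \eqref{eq:cc3}--\eqref{eq:cc4} I split $\mu$ into its mass at the atoms $\{x_j\}$ and its complement. Lower semicontinuity of $w\mapsto\int\varphi|\nabla_\gamma w|^p\dz$ on $\mathring{W}^{1,p}_\gamma$ for non-negative $\varphi$ vanishing near the atoms gives $\mu\ge|\nabla_\gamma u|^p\dz$ there; truncating a general non-negative test function by $1-\sum_j\chi_\delta^{(j)}$ and letting $\delta\to 0$ produces the separation $\mu\ge|\nabla_\gamma u|^p\dz+\sum_j\mu(\{x_j\})\delta_{x_j}$. To estimate each $\mu_j:=\mu(\{x_j\})$ from below I apply the Sobolev inequality to $\varphi_\epsilon u_n$ for a Grushin-adapted cutoff $\varphi_\epsilon$ concentrated at $x_j$ with $\|\nabla_\gamma\varphi_\epsilon\|_{L^{N_\gamma}}$ uniformly bounded in $\epsilon$, and send $n\to\infty$ and then $\epsilon\to 0$; the cross term is controlled by Hölder with conjugate exponents $p^*_\gamma/p$ and $N_\gamma/p$ through
\[
\|u\,\nabla_\gamma\varphi_\epsilon\|_{L^p}\le\|u\|_{L^{p^*_\gamma}(\mathrm{supp}\,\nabla_\gamma\varphi_\epsilon)}\,\|\nabla_\gamma\varphi_\epsilon\|_{L^{N_\gamma}}\longrightarrow 0,
\]
and yields $\mu_j\ge S\nu_j^{p/p^*_\gamma}$. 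The summability $\sum_j\nu_j^{p/p^*_\gamma}<\infty$ then follows from $\sum_j\mu_j\le\mu(\Omega)<\infty$.

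The main obstacle, absent in the case $p=2$ treated in \cite{bisci2024note}, is the lack of a bilinear splitting of $|\nabla_\gamma u_n|^p$: cross terms cannot be isolated as a single inner product that weakly vanishes, so every expansion must be absorbed via convexity--Hölder estimates as above. A second Grushin-specific subtlety is the choice of localising cutoff: a Euclidean cutoff of scale $\epsilon$ fails to keep $\|\nabla_\gamma\varphi_\epsilon\|_{L^{N_\gamma}}$ uniformly bounded when $N<N_\gamma$, so one has to use a cutoff adapted to the anisotropic Grushin dilation $(x,y)\mapsto(rx,r^{1+\gamma}y)$ in order for the localisation around each atom to produce the correct quantitative bound.
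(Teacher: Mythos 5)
Your overall strategy coincides with the paper's: weak-$*$ compactness for the limit measures, Brezis--Lieb to reduce to $v_n=u_n-u\rightharpoonup 0$, the reverse H\"older inequality between the limits of $\lvert v_n\rvert^{p^*_\gamma}\dz$ and $\lvert\nabla_\gamma v_n\rvert^p\dz$ followed by Lemma \ref{lemma:lions}, and weak lower semicontinuity plus mutual singularity for \eqref{eq:cc3}. The one real divergence is in how the atom estimate \eqref{eq:cc4} is obtained: the paper plugs shrinking cutoffs into the measure-level inequality \eqref{eq:cc5} and lets $\varepsilon\to 0$, whereas you run Lions' quantitative localisation on $\varphi_\varepsilon u_n$ and therefore need $\lVert\nabla_\gamma\varphi_\varepsilon\rVert_{L^{N_\gamma}}$ bounded uniformly in $\varepsilon$.

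That requirement is where your argument has a gap. The anisotropic cutoff $\varphi_\varepsilon(x,y)=\varphi\bigl(\frac{x-a}{\varepsilon},\frac{y-b}{\varepsilon^{1+\gamma}}\bigr)$ centred at an atom $x_j=(a,b)$ has $\lVert\nabla_\gamma\varphi_\varepsilon\rVert_{L^{N_\gamma}}$ bounded only when $a=0$: the support has measure $\sim\varepsilon^{N_\gamma}$ and on it $\lvert x\rvert^{\gamma}\lvert\nabla_y\varphi_\varepsilon\rvert\sim\lvert x\rvert^{\gamma}\varepsilon^{-(1+\gamma)}$, which is $\lesssim\varepsilon^{-1}$ if $\lvert x\rvert\le\varepsilon$ but is $\sim\lvert a\rvert^{\gamma}\varepsilon^{-(1+\gamma)}$ when $a\ne 0$, giving $\lVert\nabla_\gamma\varphi_\varepsilon\rVert_{L^{N_\gamma}}^{N_\gamma}\sim\lvert a\rvert^{\gamma N_\gamma}\varepsilon^{-\gamma N_\gamma}\to\infty$; a short optimisation over product cutoffs with independent scales in $x$ and $y$ shows that no choice works at such points. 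So, as written, the localisation fails at any atom off the degenerate set $\lbrace x=0\rbrace$. Two repairs are available. Either (a) show that no atoms occur off $\lbrace 0\rbrace\times\mathbb{R}^\ell$ when $\gamma>0$: near a point with $a\ne 0$ the Grushin gradient controls the full Euclidean gradient, so $u_n$ is bounded in the ordinary $W^{1,p}$ of a small ball and hence in $L^{p^*}$ with $p^*=Np/(N-p)>p^*_\gamma$ (since $N<N_\gamma$), which makes $\lvert u_n\rvert^{p^*_\gamma}$ locally equi-integrable and rules out atoms of $\nu$ there. Or (b), more economically, bypass cutoffs entirely: Lemma \ref{lemma:lions} applied to $(\tilde\nu,\tilde\mu)$ already gives you $\tilde\mu\ge S\sum_j\nu_j^{p/p^*_\gamma}\delta_{x_j}$, and the elementary inequality $\lvert s+t\rvert^p\le(1+\delta)\lvert s\rvert^p+C_\delta\lvert t\rvert^p$ with $s=\nabla_\gamma u_n$, $t=-\nabla_\gamma u$, tested against non-negative $\varphi\in C_c^\infty$ and passed to the limit, yields $\tilde\mu\le(1+\delta)\mu+C_\delta\lvert\nabla_\gamma u\rvert^p\dz$, hence $\tilde\mu(\lbrace x_j\rbrace)\le\mu(\lbrace x_j\rbrace)$ because singletons are Lebesgue-null; this gives \eqref{eq:cc4} directly from material you have already established. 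Everything else in your proposal is sound.
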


\begin{proof}
	Since $\Omega$ is bounded, the measures $\lvert \nabla_\gamma u_n\rvert^p\dz$ and $\lvert u_n\rvert^{p^*_\gamma}\dz$ are uniformly tight in $n.$ Therefore, by Prohorov's theorem \cite[Theorem 1.208]{fonseca_modern_methods} there exist two non-negative bounded measures $\mu, \nu$ in $\Omega$
such that \eqref{eq:cc1} holds.
	\medskip
	
	Let $\varphi\in C^\infty_c(\Omega),$ by Sobolev inequalities we have
	\begin{displaymath}
		\int_{\Omega} \lvert\varphi u_n\rvert^{p^*_\gamma}\,\dz \le S^{-p^*_\gamma/p} \left( \int_{\Omega}\lvert \nabla_\gamma(\varphi u_n)\rvert^p\,\dz\right)^{1/p}.
	\end{displaymath}
	As $n \to \infty$, the left-hand side converges to
	$
	\int_{\Omega} |\varphi|^{p^*_\gamma} \, \dnu,
	$
	while the right-hand side approaches
	$
	S^{-p^*_\gamma/p} \left( \int_{\Omega} |\varphi|^p \, \dmu \right)^{1/p},
	$
	since all the remaining lower-order terms in the expansion of $\lvert\nabla_\gamma(\varphi u_n)\rvert^p$ converge to zero in $ L^p(\Omega)$ as $n \to \infty$ because of the compactness of the Sobolev embeddings.
	
	That is holds
	\begin{equation}
		\label{eq:cc5}
		\left(\int_{\Omega}\lvert \varphi\rvert^{p^*_\gamma}\,\dnu\right)^{1/p^*_\gamma} \le S^{-1/p}\left(\int_{\Omega} \lvert \varphi\rvert^p\,\dmu\right)^{1/p}
	\end{equation}
	for all $\varphi\in C^\infty_c(\Omega)$.
	So Theorem \ref{thm:concentration} is proved in the case $u\equiv 0$ applying Lemma \ref{lemma:lions}.
	
	\medskip
	We now turn to the general case $u \rightharpoonup u$.
	Let $v_n = u_n - u\in \mathring{W}^{1,p}_\gamma(\Omega)$. Then $v_n \rightharpoonup 0$ in $ \mathring{W}^{1,p}_\gamma(\Omega)$ and by the Brezis-Lieb Lemma (see \cite[Theorem 1]{brezis_lieb}),
	\begin{equation}
		\label{eq:cc6}
		\int_\Omega \lvert u_n\rvert^{p^*_\gamma} \,\dz - \int_{\Omega} \lvert u_n - u\rvert^{p^*_\gamma} \,\dz \to \int_{\Omega} \lvert u\rvert^{p^*_\gamma}\, \dz
	\end{equation}
	as $n\to\infty$.
	Moreover, by \eqref{eq:cc6} we have
	\begin{displaymath}
		\omega_n := \nu_n -\lvert u\rvert^{p^*_\gamma}\dz = \left(\lvert u_n\vert^{p^*_\gamma} - \lvert u \rvert^{p^*_\gamma} \right)\,\dz = \lvert u_n - u \rvert^{p^*_\gamma}\dz + o(1) = \lvert v_n \rvert^{p^*_\gamma}\dz + o(1).
	\end{displaymath}
	Also let $\lambda_n = \lvert \nabla_\gamma v_n\rvert^2\dz$. We may assume that $\lambda_n \buildrel\ast\over\rightharpoonup \lambda $, while $\omega_n \buildrel\ast\over\rightharpoonup \omega = \nu - \lvert u \rvert ^{p^*_\gamma}\dz$, where $\lambda,\omega$ are positive measures.
	
	Consider $\varphi\in C^\infty_c(\Omega)$. Then, arguing as before,
	\begin{align*}
		\int_{\Omega} \lvert \varphi \rvert^{p^*_\gamma}\,\mathrm{d}\omega &= \lim_{n	\to\infty} \int_{\Omega} \lvert \varphi\rvert^{p^*_\gamma} \mathrm{d}\omega_n = \lim_{n\to\infty} \int_{\Omega}\lvert v_n\varphi\rvert^{p^*_\gamma} \,\dz \\
		&\le S^{-p^*_\gamma/p}\liminf_{n\to\infty}\left( \int_{\Omega}\lvert \nabla_\gamma(v_n\varphi)\lvert^p\,\dz\right)^{p^*_\gamma/p} \\
		&= S^{-p^*_\gamma/p} \liminf_{n\to\infty} \left( \int_{\Omega} \lvert\varphi\rvert^p \lvert \nabla_\gamma v_n\rvert^{p}\,\dz\right)^{p^*_\gamma/p} = S^{-p^*_\gamma/p}\left(\int_{\Omega} \lvert \varphi\rvert^p\,\mathrm{d}\lambda\right)^{p^*_\gamma/p},
	\end{align*}
	That is, there holds
	\begin{displaymath}
		\left(\int_{\Omega}\lvert \varphi\rvert^{p^*_\gamma}\,\mathrm{d}\omega\right)^{1/p^*_\gamma} \le S^{-1/p}\left(\int_{\Omega} \lvert \varphi\rvert^p\,\mathrm{d}\lambda\right)^{1/p}
	\end{displaymath}
	for all $\varphi\in C^\infty_c(\Omega)$. Now \eqref{eq:cc2} holds true by Lemma \ref{lemma:lions}.
	\medskip
	
	To get \eqref{eq:cc3} we first claim that $\mu \ge \lvert \nabla_\gamma u \rvert^p\,\dz$. Indeed, for any $\varphi\in C^\infty_c(\Omega),\,\varphi\ge 0$ the functional
	\begin{displaymath}
		v\mapsto \int_{\Omega} \lvert\nabla_\gamma v\rvert^p\varphi\,\dz
	\end{displaymath}
	is convex and continuous, therefore $u_n\rightharpoonup u$ in $\mathring{W}^{1,p}_\gamma(\Omega)$ implies
	\begin{displaymath}
		\int_{\Omega}\varphi\,\mathrm{d}\mu = \lim_{n\to\infty}\int_{\Omega}\lvert \nabla_\gamma u_n\rvert^p\varphi\,\dz \ge \int_{\Omega}\lvert\nabla_\gamma u\rvert^p\varphi\,\dz, \quad\hbox{for any $\varphi\in C^\infty_c(\Omega),\,\varphi \ge 0$.}
	\end{displaymath}
	
Let $\varphi\in C^\infty_c(\Omega)$ such that $0\le\varphi\le 1,\, \operatorname{supp}\varphi = B(0,1)$ and $\varphi(0) = 1$. Given $\varepsilon>0$ we apply the inequality \eqref{eq:cc5} with $\varphi\left(\frac{x-x_j}{\varepsilon}\right)$ where $j$ is fixed in $J$.
	We obtain
	\begin{displaymath}
		\nu_j^{1/p^*_\gamma} S^{1/p} \le \mu\left(B(x_j,\varepsilon)\right)^{1/p}.
	\end{displaymath}
	This implies that $\mu\left(\lbrace x_j\rbrace\right)>0$ and
	\begin{displaymath}
		\mu\ge \nu_j^{p/p^*_\gamma}S\delta_{x_j}
	\end{displaymath}
	and thus
	\begin{displaymath}
		\mu \ge \sum_{j\in J} \nu_j^{p/p^*_\gamma}S\delta_{x_j}.
	\end{displaymath}
	Since $\lbrace \lvert \nabla_\gamma u\rvert^p\,\dz\rbrace \cup \lbrace \delta_{x_j}\mid j\in J \rbrace$ is a set consisting of pairwise mutually singular measures and the latter estimate holds,  \eqref{eq:cc3} and \eqref{eq:cc4} follow.
\end{proof}

\section{Eigenvalues of the $p$-Grushin operator}
\label{sec:eigenvalues}
The Dirichlet spectrum of $-\Delta_\gamma^p$ in $\Omega$ consists of those $\lambda\in\mathbb{R}$ for which the problem
\begin{equation}
	\label{prob:eigenvalues}
	\begin{cases}
		-\Delta_\gamma^p u = \lambda \lvert u \rvert^{p-2}u \quad &\hbox{in $\Omega$},\\
		u=0 \quad & \hbox{on $\partial\Omega$},
	\end{cases}
\end{equation}
has a nontrivial weak solution $u \in \mathring{W}^{1,p}_\gamma(\Omega)$.
\medskip

The spectrum of the $2$-Grushin operator was studied in \cite[Theorem 1]{xu_chen_regan}. It consists of a sequence $\lbrace \lambda_k\rbrace$ such that
\begin{displaymath}
0<\lambda_1<\lambda_2\le \dots \le \lambda_k\le \lambda_{k+1}\le\dots
\end{displaymath}
and
\begin{displaymath}
\lambda_k\to +\infty \quad\hbox{as $k\to+\infty$}.
\end{displaymath}

A complete description of the spectrum remains unknown even for the classical $p$-Laplacian case. It is well known that the first eigenvalue $\lambda_1$ is positive, simple, and has an associate positive eigenfunction $\varphi_1$, see \cite{lindqvist,lindqvist_addendum}. Increasing and unbounded sequences of eigenvalues can be defined using various minimax schemes, but a complete list of the eigenvalues of $-\Delta_p$ remains unavailable.
\medskip

Let us denote by $W_\gamma^{-1,p'}(\Omega) =\left(\mathring{W}^{1,p}_\gamma(\Omega)\right)^*$ the dual space of $\mathring{W}^{1,p}_\gamma(\Omega)$. Introducing the operator $A_p\colon \mathring{W}^{1,p}_\gamma(\Omega)\to W_\gamma^{-1,p'}(\Omega) $  by
 \begin{displaymath}
 \langle A_p(u),v\rangle = \int_{\Omega} \lvert \nabla_\gamma u \rvert^{p-2}\nabla_\gamma u \nabla_\gamma v \,\dz,
 \end{displaymath}
a weak solution of \eqref{prob:eigenvalues} can be characterized as a function $u\in \mathring{W}^{1,p}_\gamma(\Omega)$ such that
\begin{displaymath}
\langle A_p(u),v\rangle = \lambda \int_{\Omega} \lvert u \rvert^{p-2} u v \,\dz
\end{displaymath}
for all $v\in \mathring{W}^{1,p}_\gamma(\Omega).$

We collect here some remarkable properties of the nonlinear operator $A_p \in C\left(\mathring{W}^{1,p}_\gamma(\Omega), W_\gamma^{-1,p'}(\Omega)\right)$.
\begin{enumerate}[label=(A$_\arabic*$)]
	\item  $A_p$ is $(p-1)$-homogeneous\footnote{That is $A_p(\alpha u) = \alpha^{p-1} A_p (u)$ for all $u\in \mathring{W}^{1,p}_\gamma(\Omega)$, $\alpha\ge 0$.} and odd;
	\item $A_p$ is uniformly positive: $\braket{A_p(u),u} = \norm{u}_{\gamma,p}^p$ for each $u \in \mathring{W}^{1,p}_\gamma(\Omega)$;
	\item $A_p$ is a potential operator: in fact it is the Fr\'{e}chet derivative of the functional $u\mapsto \frac{\norm{u}_{\gamma,p}^p}{p}$ in $\mathring{W}^{1,p}_\gamma(\Omega)$;
	\item $A_p$ is of type $(S)$: every sequence $\lbrace u_j\rbrace$ in $\mathring{W}^{1,p}_\gamma(\Omega)$ such that
	\begin{displaymath}
	u_j \rightharpoonup u, \quad \braket{A_p(u_j), u_j -u }\to 0
	\end{displaymath}
	has a subsequence which converges strongly to $u$ in $\mathring{W}^{1,p}_\gamma(\Omega)$.
\end{enumerate}

In particular, the compactness property (A$_4$) follows from \cite[Proposition 1.3]{perera_p_laplacian} and the fact that, by H\"{o}lder's inequality and the definition of the operator $A_p$,
\begin{displaymath}
\langle A_p(u),v\rangle \le \norm{u}_{\gamma,p}^{p-1}\norm{v}_{\gamma,p},\quad \braket{A_p(u),u} = \norm{u}_{\gamma,p}^p ~\hbox{for every $u,v\in \mathring{W}^{1,p}_\gamma(\Omega)$}.
\end{displaymath}
Now we define a non-decreasing sequence $\lbrace \lambda_k\rbrace$ of eigenvalues of $-\Delta_\gamma^p$ by means of the cohomological index. This type of construction was introduced for the $p$-Laplacian by Perera \cite{perera} (see also \cite{perera_szulkin}), and it is slightly different from the traditional one, based on the Krasnoselskii genus.

\bigskip

We recall that the $\mathbb{Z}_2$-cohomological index of Fadell and Rabinowitz \cite{fadell_rabinowitz} is defined as follows.

Let $W$ be a Banach space and let $\mathcal{A}$ denote the class of  those subsets $A$ of $W\setminus\lbrace 0\rbrace$ which are symmetric in the sense that $-A=A$. For $A\in\mathcal{A}$, let $\overline{A}= A/\mathbb{Z}_2$ be the quotient space of $A$ with each $u$ and $-u$ identified, let $f\colon \overline{A}\to \mathbb{R}P^\infty$ be the classifying map of $\overline{A}$, and let $f^*\colon H^*(\mathbb{R}P^\infty) \to H^*(\overline{A})$ be the induced homomorphism of the Alexander-Spanier cohomology rings. The cohomological index of $A$ is defined by
\begin{displaymath}
i(A) =
\begin{cases}
	\operatorname{sup} \lbrace m\ge 1\mid f^*(\omega^{n-})\ne 0\rbrace, \quad& A\ne\emptyset,\\
	0,\quad & A = \emptyset,
\end{cases}
\end{displaymath}
where $\omega\in H^1(\mathbb{R}P^\infty)$ is the generator of the polynomial ring $H^*(\mathbb{R}P^\infty)= \mathbb{Z}_2[\omega]$.
The following proposition summarizes the basic properties of the cohomological index, see \cite[Theorem 5.1]{fadell_rabinowitz}.
\begin{prop}
	\label{prop:index}
	The index $i\colon\mathcal{A}\to\mathbb{N}\cup \lbrace 0,\infty\rbrace$ satisfies the following properties
	\begin{enumerate}[label=$($i$_\arabic*)$]
		\item Definiteness: $i(A) = 0$ if and only if $A=\emptyset$$;$
		\item Monotonicity: if there is an odd continuous map from $A$ to $B$, then $i(A)\le i(B)$. Thus, equality holds when the map is an odd homeomorphism$;$
		\item Dimension: $i(A)\le \operatorname{dim} W;$
		\item Continuity: if $A$ is closed, then there is a closed neighborhood $N\in\mathcal{A}$ of $A$ such that $i(N) = i(A)$. When $A$ is compact, $N$ may be chosen to be a $\delta$-neighbourhood $N_\delta (A) = \lbrace u\in W\mid \operatorname{dist}(u,A)\le\delta\rbrace;$
		\item Subadditivity: if $A$ and $B$ are closed, then $i(A\cup B)\le i(A) + i(B);$
		\item Stability: if $SA$ is the suspension of $A\ne \emptyset$, that is the quotient space of $A\times [-1,1]$ with $A\times \lbrace 1\rbrace$ and $A\times \lbrace -1\rbrace$ collapsed to different points, then $i(SA) = i(A) +1;$
		\item Piercing property: if $A, A_0$ and $A_1$ are closed, and $\varphi\colon A\times [0,1]\to A_0\cup A_1$ is a continuous map such that $\varphi(-u,t) = -\varphi(u,t)$ for all $(u,t)\in A\times [0,1]$, $\varphi(A\times [0,1])$ is closed, $\varphi(A\times \lbrace 0\rbrace) \subset A_0$ and $\varphi(A\times \lbrace 1\rbrace)\subset A_1,$ then $i(\varphi(A\times [0,1])\cap A_0\cap A_1)\ge i(A);$
		\item Neighborhood of zero: if $U$ is a bounded closed symmetric neighborhood of $0$, then $i(\partial U) = \operatorname{dim}W.$
	\end{enumerate}
\end{prop}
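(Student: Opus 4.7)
The plan is to verify each of the eight properties using two tools: the functoriality of Alexander--Spanier cohomology applied to the classifying map $f_A\colon \overline{A}\to\mathbb{R}P^\infty$, and the cup-length interpretation that $i(A)$ is the supremum of those $m$ for which $f_A^*(\omega^{m-1})\ne 0$ in $H^*(\overline{A})$. Several properties then reduce to a diagram chase once this framework is fixed.

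Definiteness (i$_1$) is immediate from the definition. For monotonicity (i$_2$), an odd continuous map $h\colon A\to B$ descends to $\overline{h}\colon \overline{A}\to\overline{B}$; since $\mathbb{R}P^\infty$ classifies free $\mathbb{Z}_2$-spaces, $f_B\circ\overline{h}\simeq f_A$, so $f_A^*=\overline{h}^*\circ f_B^*$, and non-vanishing of $f_A^*(\omega^{m-1})$ forces non-vanishing of $f_B^*(\omega^{m-1})$. For dimension (i$_3$), radial normalization gives an odd map $A\to S^{n-1}$ with $n=\dim W$, whose classifying map factors through $\mathbb{R}P^{n-1}$, and $\omega^n=0$ there. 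Continuity (i$_4$) follows from the tautness of Alexander--Spanier cohomology on closed paracompact subsets: the cocycle realizing $f_A^*(\omega^{i(A)-1})$ extends to some open neighborhood, which one symmetrizes by replacing $N$ with $N\cap(-N)$; the $\delta$-neighborhood case uses compactness. The neighborhood-of-zero statement (i$_8$) follows because radial projection is an odd homeomorphism $\partial U\to S^{n-1}$ and $i(S^{n-1})=n$.

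For subadditivity (i$_5$), my plan is the standard relative cup-product argument: if $f_A^*(\omega^{i(A)})=0$ and $f_B^*(\omega^{i(B)})=0$, then the long exact sequences of the pairs $(\overline{A\cup B},\overline{A})$ and $(\overline{A\cup B},\overline{B})$ lift these powers to relative classes whose cup product lies in $H^*(\overline{A\cup B},\overline{A\cup B})=0$, whence $\omega^{i(A)+i(B)}$ vanishes on $\overline{A\cup B}$. For stability (i$_6$), I would build an explicit classifying map for $SA$ by extending $f_A$ across the two cones and invoke the fact that $SA/\mathbb{Z}_2$ is the unreduced suspension of $\overline{A}$; the suspension isomorphism in reduced cohomology then raises the cup length by exactly one.

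The main obstacle is the piercing property (i$_7$). I would set $C=\varphi(A\times[0,1])$, $C_i=C\cap A_i$, $D=C_0\cap C_1$, and transfer the cup length of $\overline{A}$ to $\overline{D}$ through Mayer--Vietoris applied to the closed cover $C=C_0\cup C_1$. The endpoint conditions ensure that $\omega^{i(A)-1}$ pulls back nontrivially from $\overline{A}$ to $\overline{C_0}$ via $a\mapsto\varphi(a,0)$, and symmetrically to $\overline{C_1}$; the odd continuity of $\varphi$ in $t$ then forces the class to survive the Mayer--Vietoris connecting homomorphism, so its restriction to $\overline{D}$ remains nonzero, yielding $i(D)\ge i(A)$. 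The delicate points are that $C_0$ and $C_1$ are only closed, so one must run Mayer--Vietoris in the Alexander--Spanier setting (where tautness compensates), and the $\mathbb{Z}_2$-equivariance must be tracked at every step of the lifting procedure; this is where I expect the argument to require the greatest care.
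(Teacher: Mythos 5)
First, a point of comparison: the paper does not prove Proposition \ref{prop:index} at all --- it is quoted from Fadell and Rabinowitz \cite{fadell_rabinowitz} (Theorem 5.1 there), so there is no internal proof to measure your attempt against, and the intended reading is that these are classical properties to be cited. Judged on its own terms, your sketch sets up the right framework (classifying maps, functoriality, cup length), and (i$_1$), (i$_2$), (i$_3$), (i$_5$) are essentially correct. But several of the remaining items contain genuine errors. In (i$_6$), the identification of $\overline{SA}=SA/\mathbb{Z}_2$ with the unreduced suspension of $\overline{A}$ is false: the free action on $SA$ is $-(u,t)=(-u,-t)$, which swaps the two cone points, and the quotient is the mapping cone of the double cover $A\to\overline{A}$ (take $A=S^0$: then $SA=S^1$ with the antipodal action, so $\overline{SA}=\mathbb{R}P^1=S^1$, whereas the unreduced suspension of a point is an interval). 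The suspension isomorphism therefore does not apply; one needs the join description $SA\cong A*S^0$ together with the behaviour of the index under joins, or a direct computation for the mapping cone. In (i$_8$), the radial projection $\partial U\to \lbrace u : \norm{u}=1\rbrace$ is odd, continuous and surjective but in general \emph{not} injective (a bounded symmetric neighborhood of $0$ need not be starshaped), so monotonicity only yields $i(\partial U)\le \dim W$, which is already (i$_3$); the substantive lower bound is normally deduced from the piercing property applied to a small sphere with $A_0=U$ and $A_1=\overline{W\setminus U}$.

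Two further gaps. In (i$_4$), the inequality $i(N)\ge i(A)$ is free from monotonicity; the content is $i(N)\le i(A)$, so what tautness must propagate to a neighborhood is the \emph{vanishing} of $f_A^*(\omega^{i(A)})$, not the non-vanishing of $f_A^*(\omega^{i(A)-1})$ as you write --- the tool is correct but aimed at the wrong inequality. Finally, (i$_7$) is the hard item and your sketch does not contain an argument for it: the assertion that ``the odd continuity of $\varphi$ in $t$ forces the class to survive the Mayer--Vietoris connecting homomorphism'' is exactly the statement to be proved, and nothing in the endpoint conditions alone prevents the restriction to $\overline{C_0\cap C_1}$ from dying. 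The standard proof proceeds by contradiction: if $E=\varphi(A\times[0,1])\cap A_0\cap A_1$ had $i(E)<i(A)$, one uses continuity (i$_4$) to replace $E$ by a closed neighborhood $N$ with the same index, notes that $\varphi^{-1}(A_0\setminus\operatorname{int}N)$ and $\varphi^{-1}(A_1\setminus\operatorname{int}N)$ are disjoint closed symmetric sets covering $A\times[0,1]$ together with $\varphi^{-1}(N)$ and containing $A\times\lbrace 0\rbrace$ and $A\times\lbrace 1\rbrace$ respectively, and then runs the cup-length (subadditivity-type) argument on $A\times[0,1]$, whose index equals $i(A)$. Since the paper only ever uses these properties through the citation, the safe course is to keep the reference to \cite{fadell_rabinowitz} rather than to reprove the proposition.
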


We define a $C^1$-Finsler manifold $\mathcal{M}$ by setting
\begin{displaymath}
\mathcal{M} = \lbrace u\in \mathring{W}^{1,p}_\gamma(\Omega)\mid \norm{u}_{\gamma,p} = 1\rbrace.
\end{displaymath}
For all $k\in\mathbb{N}$, we denote by $\mathcal{F}_k$ the family of all closed, symmetric subsets $M$ of $\mathcal{M}$ such that $i(M)\ge k$, and set
\begin{equation}
	\label{eq:eigenvalues}
	\lambda_k= \inf_{M\in\mathcal{F}_k}\sup_{u\in M}\Psi(u),
\end{equation}
where
\begin{displaymath}
\Psi(u) = \frac{1}{\norm{u}_p^p}, \quad u\in\mathcal{M}\setminus\lbrace 0\rbrace.
\end{displaymath}
Given $a\in\mathbb{R}$ we use the standard notation for the sublevels and superlevels of $\Psi$
\begin{displaymath}
\Psi_a = \lbrace u\in \mathring{W}^{1,p}_\gamma(\Omega)\mid \Psi(u)\ge a\rbrace, \quad \Psi^a = \lbrace u\in \mathring{W}^{1,p}_\gamma(\Omega)\mid \Phi(u)\le a\rbrace.
\end{displaymath}

A straightforward application of \cite[Theorem 4.6]{perera_p_laplacian} to the operator $A_p$ yields the following spectral theory.
\begin{prop}
	The sequence $\lbrace \lambda_{k}\rbrace$ defined in \eqref{eq:eigenvalues} is a nondecreasing sequence of eigenvalues of $-\Delta_\gamma^p$. Moreover
	\begin{enumerate}
		\item[$(1)$] the smallest eigenvalue, called the \emph{first eigenvalue}, is
		\begin{displaymath}
		\lambda_1 = \min_{u\ne 0} \frac{\norm{u}_{\gamma,p}^p}{\norm{u}_p^p}>0;
		\end{displaymath}
		\item[$(2)$] we have $i(\mathcal{M}\setminus \Psi_{\lambda_k}) < k\le i(\Psi^{\lambda_k})$.
		If $\lambda_k < \lambda < \lambda_{k+1},$ then
		\begin{equation}
			\label{eq:index}
			i(\Psi^{\lambda_k}) = i(\mathcal{M}\setminus \Psi_\lambda) = i(\Psi^\lambda) = i(\mathcal{M}\setminus\Psi_{\lambda_{k+1}})=k;
		\end{equation}
 		\item[$(3)$] $\lambda_{k}\to+\infty$ as $k\to+\infty$.
	\end{enumerate}
\end{prop}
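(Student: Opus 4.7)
The plan is to reduce all three claims to the abstract minimax spectral theorem \cite[Theorem 4.6]{perera_p_laplacian} applied to $A_p$. The four structural properties (A$_1$)--(A$_4$) already recorded are precisely the hypotheses of that theorem, so once they are in place the minimax values in \eqref{eq:eigenvalues} are automatically critical values of $\Psi$ restricted to the $C^1$-Finsler manifold $\mathcal{M}$, and the Lagrange multiplier rule on $\mathcal{M}$ identifies them with eigenvalues of \eqref{prob:eigenvalues}. In particular, property (A$_4$) together with the compact embedding $\mathring{W}^{1,p}_\gamma(\Omega)\hookrightarrow L^p(\Omega)$ delivers the Palais--Smale condition for $\Psi$ on $\mathcal{M}$ at every level, which is the technical engine behind the deformation arguments used below.

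For part $(1)$, the key observation is that for any $u\in\mathcal{M}$ the antipodal pair $\{\pm u\}$ has cohomological index one, either by (i$_2$) via the obvious odd homeomorphism onto $S^0$ or by (i$_8$) applied inside the one-dimensional subspace $\mathrm{span}\{u\}$. Hence $\mathcal{F}_1$ contains every antipodal pair and \eqref{eq:eigenvalues} collapses to
\begin{displaymath}
\lambda_1 \;=\; \inf_{u\in\mathcal{M}}\Psi(u) \;=\; \inf_{\norm{u}_{\gamma,p}=1}\frac{1}{\norm{u}_p^p} \;=\; \min_{u\ne 0}\frac{\norm{u}_{\gamma,p}^p}{\norm{u}_p^p},
\end{displaymath}
the last equality coming from the $0$-homogeneity of the Rayleigh quotient. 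Existence of a minimizer and strict positivity of $\lambda_1$ follow from the compact embedding $\mathring{W}^{1,p}_\gamma(\Omega)\hookrightarrow L^p(\Omega)$ via the direct method.

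For part $(2)$, the strict inequality $i(\mathcal{M}\setminus\Psi_{\lambda_k})<k$ is immediate from the definition: otherwise (i$_4$) would yield a closed symmetric $M\in\mathcal{F}_k$ with $\sup_M\Psi<\lambda_k$, violating \eqref{eq:eigenvalues}. The reverse inequality $k\le i(\Psi^{\lambda_k})$ is obtained by choosing $M_n\in\mathcal{F}_k$ with $\sup_{M_n}\Psi<\lambda_k+1/n$, so $M_n\subset\Psi^{\lambda_k+1/n}$, then using the Palais--Smale condition to show that any closed neighborhood of $\Psi^{\lambda_k}$ eventually absorbs $\Psi^{\lambda_k+1/n}$, and invoking (i$_2$) and (i$_4$). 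On a regular interval $\lambda_k<\lambda<\lambda_{k+1}$ the absence of critical values in $(\lambda_k,\lambda_{k+1})$ together with a standard odd pseudo-gradient deformation produces equivariant retracts of $\Psi^\lambda$ onto $\Psi^{\lambda_k}$ and of $\mathcal{M}\setminus\Psi_\lambda$ onto $\mathcal{M}\setminus\Psi_{\lambda_{k+1}}$; by (i$_2$) the four indices in \eqref{eq:index} then coincide, and the common value is $k$ by the bounds just derived.

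Part $(3)$ is the classical Lyusternik--Schnirelman conclusion: if $\lambda_k\le L<\infty$ for every $k$, then monotonicity (i$_2$) forces $i(\Psi^L)=+\infty$, but the Palais--Smale condition together with the standard odd deformation argument confines $\Psi^L$ inside a set of finite cohomological index -- a contradiction, so $\lambda_k\to+\infty$. The principal technical obstacle throughout is producing the equivariant pseudo-gradient flow on the Finsler manifold $\mathcal{M}$ used in parts $(2)$ and $(3)$; this is precisely what property (A$_4$) was designed to enable in the abstract framework of \cite{perera_p_laplacian}, so the proposition follows.
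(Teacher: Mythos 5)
Your proposal takes exactly the route of the paper, which proves this proposition solely by invoking \cite[Theorem 4.6]{perera_p_laplacian} for the operator $A_p$ (the hypotheses of that abstract theorem being precisely the recorded properties (A$_1$)--(A$_4$)); your extra sketches of parts $(1)$--$(3)$ simply reproduce the internal arguments of that cited theorem. One remark: once $k\le i(\Psi^{\lambda_k})$ and $i(\mathcal{M}\setminus\Psi_{\lambda_{k+1}})\le k$ are established, the four equalities in \eqref{eq:index} already follow from the inclusions $\Psi^{\lambda_k}\subset\mathcal{M}\setminus\Psi_{\lambda}\subset\Psi^{\lambda}\subset\mathcal{M}\setminus\Psi_{\lambda_{k+1}}$ and monotonicity (i$_2$) alone, so the equivariant deformation you invoke there (which would anyway require knowing that $(\lambda_k,\lambda_{k+1})$ contains no eigenvalues at all, not just none of the minimax values) is not needed.
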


\section{An abstract critical point theorem}
\label{sec:abstract}
Consider an even functional $\Phi$ of class $C^1$ on a Banach space $W$, let $\mathcal{A}^*$ denote the class of symmetric subsets of $W$, let $r>0$ and $S_r = \lbrace u\in W\mid \norm{u} = r\rbrace,$ let $0<b\le+\infty.$ Let $\Gamma$ denote the group of odd homeomorphisms of $W$ that are the identity outside $\Phi^{-1}(0,b)$. The pseudo-index of $M\in\mathcal{A}^*$ related to $S_r$ and $\Gamma$ is defined by
\begin{displaymath}
i^*(M) = \min_{\gamma\in\Gamma} i (\gamma(M)\cap S_r).
\end{displaymath}
To get our result, we will apply the following critical point theorem, whose proof can be found in \cite[Theorem 2.2]{perera_multiplicity}.
\begin{thm}
	\label{thm:critical_point}
	Let $A_0$, $B_0$ be symmetric subsets of $S_1$ such that $A_0$ is compact, $B_0$ is closed, and
	\begin{displaymath}
	i(A_0)\ge k+m, 	\quad i(S_1\setminus B_0)\le k
	\end{displaymath}
	for some integers $k\ge 0$ and $m\ge 1$. Assume that there exists $R>r$ such that
	\begin{displaymath}
	\operatorname{sup}\Phi(A)\le 0 < \operatorname{inf}\Phi(B), \quad \operatorname{sup}\Phi(X)<b,
	\end{displaymath}
	where
	\begin{align*}
		A &= \lbrace Ru\mid u\in A_0\rbrace, \\
		B &= \lbrace ru\mid u\in B_0\rbrace, \\
		X &= \lbrace tu\mid u\in A, 0\le t\le 1\rbrace.
	\end{align*}
	For $j=k+1,\dots, k+m,$ we set
	\begin{displaymath}
	\mathcal{A}_j^* = \lbrace M\in \mathcal{A}^*\mid \hbox{$M$ is compact and $i^*(M)\ge j$}\rbrace
	\end{displaymath}
	and
	\begin{displaymath}
	c_j^* = \inf_{M\in\mathcal{A}_j^*} \max_{u\in M} \Phi(u).
	\end{displaymath}
	Then
	\begin{displaymath}
	\inf \Phi(B) \le c_{k+1}^*\le \dots \le c_{k+m}^* \le \sup \Phi(X),
	\end{displaymath}
	in particular, $0<c_j^*<b.$ If, in addition, $\Phi$ satisfies the Palais-Smale condition for all levels $c\in (0,b)$, \footnote{This means that every sequence $\{u_j\}$ in $W$ such that $\Phi(u_j) \to c$ and $\Phi'(u_j) \to 0$ as $j \to +\infty$ admits a subsequence which converges strongly in $W$.} then each $c_j^*$ is a critical value of $\Phi$ and there are $m$ distinct pairs of associated critical points.
\end{thm}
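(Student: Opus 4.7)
The plan is to carry out the pseudo-index minimax scheme of Benci-Rabinowitz type: first verify that the class $\mathcal{A}_{k+m}^*$ is nonempty so that $c_{k+1}^*,\dots,c_{k+m}^*$ are well defined, then establish the two-sided bound $\inf\Phi(B)\le c_{k+1}^*\le\dots\le c_{k+m}^*\le\sup\Phi(X)$, and finally upgrade these minimax levels to critical values under Palais-Smale via an equivariant deformation lemma. For nonemptiness, I would show $X\in\mathcal{A}_{k+m}^*$. The set $X$ is compact (continuous image of $[0,1]\times A_0$) and symmetric. Fix $\gamma\in\Gamma$ and apply the piercing property (i$_7$) to $A:=A_0$, $A_0':=\{u\in W\mid\|u\|\le r\}$, $A_1':=\{u\in W\mid\|u\|\ge r\}$, with $\varphi(u,t):=\gamma(tRu)$. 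Oddness in $u$ follows from oddness of $\gamma$ (which in particular forces $\gamma(0)=0$); at $t=0$ one has $\varphi(u,0)=0\in A_0'$, while at $t=1$ the assumption $\sup\Phi(A)\le 0$ puts $\Phi(Ru)$ outside $(0,b)$, so $\gamma(Ru)=Ru\in A_1'$. The piercing property then yields $i(\gamma(X)\cap S_r)\ge i(A_0)\ge k+m$, and minimizing over $\gamma$ gives $i^*(X)\ge k+m$, whence $c_{k+m}^*\le\sup\Phi(X)<b$.

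For the lower bound, given $M\in\mathcal{A}_j^*$ with $j\ge k+1$, taking $\gamma=\mathrm{id}\in\Gamma$ in the definition of $i^*$ gives $i(M\cap S_r)\ge k+1$. Since $u\mapsto u/r$ is an odd homeomorphism, $i(S_r\setminus B)=i(S_1\setminus B_0)\le k$, so by monotonicity of $i$ we cannot have $M\cap S_r\subset S_r\setminus B$; hence $M\cap B\ne\emptyset$, giving $\max_M\Phi\ge\inf\Phi(B)>0$, and taking the infimum yields $c_j^*\ge\inf\Phi(B)$. The chain $\mathcal{A}_{k+m}^*\subset\cdots\subset\mathcal{A}_{k+1}^*$ gives monotonicity of the $c_j^*$. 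Under PS$_c$ at a level $c=c_j^*\in(0,b)$, if $c$ were regular the standard equivariant deformation lemma produces an odd homeomorphism $\eta$, equal to the identity outside $\Phi^{-1}((c-2\varepsilon,c+2\varepsilon))\subset\Phi^{-1}((0,b))$, with $\eta(\Phi^{c+\varepsilon})\subset\Phi^{c-\varepsilon}$. Such $\eta$ lies in $\Gamma$, and since $\Gamma$ is a group under composition the map $\gamma\mapsto\gamma\circ\eta$ is a bijection, so $i^*(\eta(M))=i^*(M)\ge j$ for every $M\in\mathcal{A}_j^*$. Choosing $M$ with $\max_M\Phi<c+\varepsilon$ then contradicts the definition of $c_j^*$.

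For the multiplicity statement, when $c=c_j^*=\cdots=c_{j+\ell}^*$ coincide I need $i(K_c)\ge\ell+1$. By PS the set $K_c$ is compact, so by the continuity property (i$_4$) there exists a closed symmetric neighborhood $N$ with $i(N)=i(K_c)$; the refined deformation lemma produces $\eta\in\Gamma$ and $\varepsilon>0$ with $\eta(\Phi^{c+\varepsilon}\setminus N)\subset\Phi^{c-\varepsilon}$. The key estimate is the pseudo-index subadditivity
\[
i^*(M)\le i^*(\overline{M\setminus N})+i(N),
\]
derived, for each $\gamma\in\Gamma$, from $i(\gamma(M)\cap S_r)\le i(\gamma(\overline{M\setminus N})\cap S_r)+i(\gamma(N)\cap S_r)\le i(\gamma(\overline{M\setminus N})\cap S_r)+i(N)$ by subadditivity and monotonicity of $i$ together with $i(\gamma(N))=i(N)$, and then minimizing over $\gamma$ (noting that $i(N)$ is constant in $\gamma$). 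If $i(K_c)\le\ell$, picking $M\in\mathcal{A}_{j+\ell}^*$ with $\max_M\Phi<c+\varepsilon$ gives $\eta(\overline{M\setminus N})\in\mathcal{A}_j^*$ with $\max\Phi\le c-\varepsilon$, contradicting $c_j^*=c$.

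The main obstacle I expect is the last paragraph: the subadditivity bound for $i^*$ and the invariance $i^*(\eta(\cdot))=i^*(\cdot)$ both rely crucially on $\Gamma$ being a group, while constructing a deformation $\eta$ that genuinely belongs to $\Gamma$ (odd, identity outside $\Phi^{-1}(0,b)$, and respecting the neighborhood $N$ of $K_c$) demands a careful pseudo-gradient argument in the Banach space setting, since $\Phi$ is only $C^1$. Together with the piercing-property step used to insert $X$ in $\mathcal{A}_{k+m}^*$, these two ingredients carry essentially all the topological content of the theorem.
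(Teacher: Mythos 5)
Your proposal is correct and follows essentially the same route as the source the paper relies on: the paper itself does not reprove Theorem \ref{thm:critical_point} but defers to \cite[Theorem 2.2]{perera_multiplicity}, whose argument is exactly the pseudo-index scheme you reconstruct (piercing property to show $i^*(X)\ge k+m$, the comparison $i(M\cap S_r)\ge k+1 > i(S_r\setminus B)$ for the lower bound, and the equivariant deformation lemma plus the group structure of $\Gamma$ and the subadditivity $i^*(M)\le i^*(\overline{M\setminus N})+i(N)$ for the critical values and multiplicity). The only points needing the usual care are the ones you already flag: choosing $\varepsilon$ so small that $(c-2\varepsilon,c+2\varepsilon)\subset(0,b)$ so that $\eta\in\Gamma$, and using the open/closed neighborhood pair of $K_c$ so that $\overline{M\setminus N}\subset \Phi^{c+\varepsilon}\setminus U$ in the multiplicity step.
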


\section{Proof of Theorem \ref{thm:main}}
\label{sec:proof}
In this section we prove Theorem \ref{thm:main}. Solutions of problem \eqref{problem:main} coincide with critical points of the $C^1$-functional $I_\gamma \colon \mathring{W}^{1,p}_\gamma(\Omega) \to \mathbb{R}$ defined by
\begin{displaymath}
	I_\gamma(u) = \frac{1}{p} \int_{\Omega}^{}(\lvert \nabla_\gamma u \rvert^p - \lambda \lvert u \rvert^p) \, \dz -\frac{1}{p^*_\gamma}\int_{\Omega}^{} \lvert u \rvert^{p^*_\gamma} \, \dz.
\end{displaymath}
In order to apply Theorem \ref{thm:critical_point} to the functional $I_\lambda$ we need the functional $I_\lambda$ to satisfy the Palais-Smale conditions under a certain level, we will use an argument of \cite[Theorem 3.4]{guedda_veron}.
\begin{lemma}
	\label{lemma:palais-smale}
	$I_\gamma$ satisfies the (PS)$_c$ conditions for all $c<S^{N_\gamma/p} /N_\gamma$.
\end{lemma}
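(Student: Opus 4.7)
The plan is to follow the classical Brezis--Nirenberg strategy, based on the Concentration--Compactness Principle of Theorem \ref{thm:concentration}. Let $\{u_n\}\subset\mathring{W}^{1,p}_\gamma(\Omega)$ be a sequence with $I_\gamma(u_n)\to c$ and $I_\gamma'(u_n)\to 0$ in $W_\gamma^{-1,p'}(\Omega)$. First I would establish boundedness of $\{u_n\}$ in $\mathring{W}^{1,p}_\gamma(\Omega)$. The combination $pI_\gamma(u_n)-\langle I_\gamma'(u_n),u_n\rangle=(p/N_\gamma)\|u_n\|_{p^*_\gamma}^{p^*_\gamma}$ yields $\|u_n\|_{p^*_\gamma}^{p^*_\gamma}\le C(1+\|u_n\|_{\gamma,p})$; H\"older's inequality with the conjugate pair $(p^*_\gamma/p,N_\gamma/p)$ then controls $\|u_n\|_p^p$ sublinearly in $\|u_n\|_{\gamma,p}$; substituting both estimates into $I_\gamma(u_n)=c+o(1)$ and using $p>1$ gives $\|u_n\|_{\gamma,p}\le C$. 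Up to a subsequence, $u_n\rightharpoonup u$ in $\mathring{W}^{1,p}_\gamma(\Omega)$, $u_n\to u$ strongly in $L^p(\Omega)$ by Proposition 2.1 and almost everywhere. Theorem \ref{thm:concentration} then produces measures $\mu,\nu$, an at most countable set $J$ of concentration points $\{x_j\}\subset\overline\Omega$, and positive numbers with $\mu_j\ge S\nu_j^{p/p^*_\gamma}$.

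The central step is to prove $J=\emptyset$. For each $j\in J$ I would choose a Grushin-anisotropic cut-off $\phi_{j,\varepsilon}\in C^\infty_c(\mathbb{R}^N)$ localized around $x_j$ at scale $\varepsilon$, designed so that $\|\nabla_\gamma\phi_{j,\varepsilon}\|_{L^{N_\gamma}}$ stays bounded uniformly in $\varepsilon$. Testing $\langle I_\gamma'(u_n),u_n\phi_{j,\varepsilon}\rangle=o(1)$ and letting $n\to\infty$ first (by tight convergence of $\mu_n,\nu_n$), then $\varepsilon\to 0$, the Lebesgue contribution $\lambda\int_\Omega|u|^p\phi_{j,\varepsilon}\,\dz$ vanishes by dominated convergence, while the cross term
\[
\int_\Omega|\nabla_\gamma u_n|^{p-2}\nabla_\gamma u_n\cdot(u_n\nabla_\gamma\phi_{j,\varepsilon})\,\dz
\]
is controlled via H\"older by $\|u_n\|_{\gamma,p}^{p-1}\|u_n\nabla_\gamma\phi_{j,\varepsilon}\|_{L^p(\Omega)}$, and a second H\"older with the conjugate pair $(p^*_\gamma/p,N_\gamma/p)$ bounds the latter factor by $C\,\|u_n\|_{L^{p^*_\gamma}(\mathrm{supp}\,\phi_{j,\varepsilon})}$, which tends to zero as $n\to\infty$ and $\varepsilon\to 0$ by absolute continuity of $|u|^{p^*_\gamma}\,\dz$. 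One thus obtains $\mu(\{x_j\})=\nu(\{x_j\})=\nu_j$; combining this with $\mu_j\le\mu(\{x_j\})$ and $\mu_j\ge S\nu_j^{p/p^*_\gamma}$ forces $\nu_j\ge S^{N_\gamma/p}$ for every $j\in J$.

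Testing $\langle I_\gamma'(u_n),\varphi\rangle=o(1)$ against a fixed $\varphi\in C^\infty_c(\Omega)$ identifies $u$ as a weak solution of \eqref{problem:main}, so $I_\gamma(u)=\frac{1}{N_\gamma}\int_\Omega|u|^{p^*_\gamma}\,\dz\ge 0$. Passing to the limit in
\[
I_\gamma(u_n)-\tfrac{1}{p^*_\gamma}\langle I_\gamma'(u_n),u_n\rangle = \tfrac{1}{N_\gamma}\bigl(\|u_n\|_{\gamma,p}^p-\lambda\|u_n\|_p^p\bigr),
\]
and using $\|u_n\|_p^p\to\|u\|_p^p$ together with $\|u_n\|_{\gamma,p}^p\to\mu(\overline\Omega)\ge\int_\Omega|\nabla_\gamma u|^p\,\dz+\sum_{j\in J}\nu_j$ (from \eqref{eq:cc3} and the atomic equality just derived), one obtains
\[
c\ge I_\gamma(u)+\frac{1}{N_\gamma}\sum_{j\in J}\nu_j\ge\frac{1}{N_\gamma}\,S^{N_\gamma/p}
\]
whenever $J\ne\emptyset$, contradicting the hypothesis $c<S^{N_\gamma/p}/N_\gamma$. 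Hence $J=\emptyset$, so $\int_\Omega|u_n|^{p^*_\gamma}\,\dz\to\int_\Omega|u|^{p^*_\gamma}\,\dz$, and by uniform convexity of $L^{p^*_\gamma}$ one has $u_n\to u$ in $L^{p^*_\gamma}(\Omega)$. Strong convergence in $\mathring{W}^{1,p}_\gamma(\Omega)$ then follows from the $(S)$-property $(A_4)$ of $A_p$ applied to
\[
\langle A_p(u_n),u_n-u\rangle = \langle I_\gamma'(u_n),u_n-u\rangle + \lambda\int_\Omega|u_n|^{p-2}u_n(u_n-u)\,\dz + \int_\Omega|u_n|^{p^*_\gamma-2}u_n(u_n-u)\,\dz,
\]
whose right-hand side is $o(1)$.

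The hardest step is the cross-term analysis in the localization argument. When $p\ne 2$ the quadratic expansion available in \cite{bisci2024note} is no longer at our disposal, and at atoms $x_j$ located on the degenerate subspace $\{0\}\times\mathbb{R}^\ell$ the standard isotropic Euclidean rescaling of the cut-off does not yield a uniform $L^{N_\gamma}$-bound on $\nabla_\gamma\phi_{j,\varepsilon}$, because the weight $|x|^\gamma$ collapses near the singular set. One must therefore choose $\phi_{j,\varepsilon}$ using the intrinsic Grushin anisotropic dilation $(x,y)\mapsto(\varepsilon x,\varepsilon^{1+\gamma}y)$, which respects the homogeneous dimension $N_\gamma$ and secures precisely the uniform bound needed to close the H\"older estimate on the cross term.
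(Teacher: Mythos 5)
Your overall strategy coincides with the paper's: boundedness via the $I_\gamma-\frac1p\langle I_\gamma',\cdot\rangle$ combination, the Concentration--Compactness Principle, a localization argument at the atoms to force $\nu_j\ge S^{N_\gamma/p}$, the energy lower bound $c\ge\frac{1}{N_\gamma}S^{N_\gamma/p}$ contradicting $c<S^{N_\gamma/p}/N_\gamma$, strong $L^{p^*_\gamma}$ convergence, and then strong convergence in $\mathring{W}^{1,p}_\gamma(\Omega)$. Two steps differ. At the end you invoke the $(S)$-property $(A_4)$ directly on $\langle A_p(u_n),u_n-u\rangle\to 0$, whereas the paper shows $\{A_p(u_n)\}$ is Cauchy in $W^{-1,p'}_\gamma(\Omega)$ and concludes via Simon's inequalities; your route is shorter and perfectly valid given $(A_4)$. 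The more substantial divergence is the localization step: you estimate the cross term $\int_\Omega u_n|\nabla_\gamma u_n|^{p-2}\nabla_\gamma u_n\cdot\nabla_\gamma\phi_{j,\varepsilon}\,\dz$ directly by H\"older, which forces you to build cut-offs with $\|\nabla_\gamma\phi_{j,\varepsilon}\|_{L^{N_\gamma}}$ bounded uniformly in $\varepsilon$ (hence the anisotropic dilation $(x,y)\mapsto(\varepsilon x,\varepsilon^{1+\gamma}y)$ near the degenerate set). The paper avoids this entirely: it first passes to the limit in $\langle I_\gamma'(u_n),\varphi u_n\rangle\to0$ for a \emph{fixed} $\varphi\in C^1(\overline\Omega)$ (using $u_n\to u$ in $L^p$ and $|\nabla_\gamma u_n|^{p-2}\nabla_\gamma u_n\rightharpoonup T$ in $(L^{p'})^N$), subtracts the limit equation tested with $u\varphi$, and the two $\nabla_\gamma\varphi$ terms cancel, yielding $\int_\Omega\varphi\,\dmu=\int_\Omega\varphi\,T\nabla_\gamma u\,\dz+\sum_j\nu_j\varphi(x_j)$. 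The $\varepsilon\to0$ limit then only involves fixed $L^1$ densities tested against $\varphi_j$, i.e.\ dominated convergence, and no uniform gradient bound on the cut-off is ever needed. This cancellation buys a genuinely simpler argument and makes your "hardest step" moot.

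There is one concrete error in your version of the cross-term estimate as written: after the second H\"older inequality you bound the relevant factor by $C\|u_n\|_{L^{p^*_\gamma}(\operatorname{supp}\phi_{j,\varepsilon})}$ and claim it tends to zero. It does not: by \eqref{eq:cc1}--\eqref{eq:cc2}, $\liminf_n\int_{\operatorname{supp}\phi_{j,\varepsilon}}|u_n|^{p^*_\gamma}\,\dz\ge\nu_j>0$ precisely because $x_j$ is a concentration point, so this bound is useless uniformly in $n$. The standard repair (consistent with the order of limits you announce earlier) is to keep the factor as $\bigl(\int_\Omega|u_n|^p|\nabla_\gamma\phi_{j,\varepsilon}|^p\,\dz\bigr)^{1/p}$, pass $n\to\infty$ \emph{first} using the strong $L^p$ convergence against the fixed bounded weight $|\nabla_\gamma\phi_{j,\varepsilon}|^p$, thereby replacing $u_n$ by the fixed limit $u$, and only then apply H\"older and send $\varepsilon\to0$ using the absolute continuity of $|u|^{p^*_\gamma}\,\dz$. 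With that correction (or by adopting the paper's cancellation trick) your argument closes.
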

\begin{proof}
	Let $\lbrace u_n\rbrace$ be a sequence in $\mathring{W}^{1,p}_\gamma(\Omega)$ which satisfies the Palais-Smale conditions.
	First of all we claim that
	\begin{equation}
		\label{claim:1}
		\text{the sequence $\lbrace u_n\rbrace$ is bounded in $\mathring{W}^{1,p}_\gamma(\Omega).$}
	\end{equation}
	Indeed, for any $n\in\mathbb{N}$ there exists $k>0$ such that
	\begin{equation}
		\label{eq:1}
		\lvert I_\gamma(u_n) \rvert \le k
	\end{equation}
	and
	\begin{displaymath}
		\left\vert
		\left\langle I'_\gamma (u_n), \tfrac{u_n}{\Vert u_n \Vert_{\gamma,p}} \right\rangle
		\right\vert \leq k
	\end{displaymath}
	and so
	\begin{equation}
		\label{eq:2}
		I_\gamma(u_n) - \frac{1}{p} \braket{I'_\gamma(u_n) , u_n} \le k(1+\norm{u_n}_{\gamma,p}).
	\end{equation}
	Furthermore,
	\begin{displaymath}
		I_\gamma(u_n) - \frac{1}{p} \braket{I'_\gamma(u_n) , u_n} = \left(\frac{1}{p}- \frac{1}{p^*_\gamma}\right) \norm{u_n}_{p_\gamma^*}^{p_\gamma^*} = \frac{1}{N_\gamma} \norm{u_n}_{p_\gamma^*}^{p_\gamma^*}
	\end{displaymath}
	so, thanks to (\ref{eq:2}), we get that for any $n\in\mathbb{N}$
	\begin{equation}
		\label{eq:3}
		\norm{u_n}_{p_\gamma^*}^{p_\gamma^*} \le \hat{k} (1+\norm{u_n}_{\gamma,p})
	\end{equation}
	for a suitable positive constant $\hat{k}.$
	By H\"{o}lder's inequality, we get
	\begin{displaymath}
		\norm{u_n}_{p}^p \le \lvert \Omega \rvert^{{p}/{N_\gamma}} \norm{u_n}_{p_\gamma^*}^p \le \hat{k}^{{p}/{p^*_\gamma}}\lvert \Omega \rvert^{{p}/{N_\gamma}} (1+\norm{u_n}_{\gamma,p})^{p/p^*_\gamma}
	\end{displaymath}
	that is,
	\begin{equation}
		\label{eq:4}
		\norm{u_n}_{p}^p \le \tilde{k} (1+\norm{u_n}_{\gamma,p}),
	\end{equation}
	for a suitable $\tilde{k}>0$ independent of $j$. By (\ref{eq:1}), (\ref{eq:3}) and (\ref{eq:4}), we have that
	\begin{align*}
		k&\ge I_\gamma (u_n) = \frac{1}{p} \norm{u_n}_{\gamma,p}^p - \frac{\lambda}{p} \norm{u_n}_{p}^p - \frac{1}{p^*_\gamma}\norm{u_n}_{p^*_\gamma}^{p^*_\gamma} \\
		&\ge \frac{1}{p} \norm{u_n}_{\gamma,p}^p - \overline{k} (1+\norm{u_n}_{\gamma,p})
	\end{align*}
	for some constant $\overline{k}>0$ independent of $n$, so (\ref{claim:1}) is proved.
 \medskip

	So, there exist a sequence, still denoted by $\lbrace u_n \rbrace$, $u\in \mathring{W}^{1,p}_\gamma(\Omega)$ and $T\in (L^{p'}(\Omega))^N$ such that $ u_n \rightharpoonup u$ weakly in $\mathring{W}^{1,p}_\gamma(\Omega)\cap L^{p^*_\gamma}(\Omega)$ and strongly in $L^q(\Omega), \, 1\le q<p^*_\gamma$ and $\lvert \nabla_\gamma u_n \rvert^{p-2} \nabla_\gamma u_n \rightharpoonup T$ weakly in $ (L^{p'}(\Omega))^N$.
	
	Moreover
	\begin{equation}
		\label{eq:5}
		- \nabla_\gamma\cdot  T - \lvert u \rvert^{p^*_\gamma -2} u - \lambda \lvert u \rvert^{p-2} u = 0
	\end{equation}

	in $W_\gamma^{-1,p'}(\Omega).$
		
	From Theorem \ref{thm:concentration} (Concentration-Compactness) there exist two non-negative bounded measures $\mu,\nu$ in $\Omega$, an at most countable family of points $\lbrace x_j\rbrace_{j\in J}$ and positive numbers $\lbrace \nu_j\rbrace_{j\in J}$ such that
	\begin{displaymath}
	 \lvert \nabla_\gamma u_n\rvert^p\dz \buildrel\ast\over\rightharpoonup \mu, \quad \lvert u_n\rvert^{p^*_\gamma}\dz \buildrel\ast\over\rightharpoonup \nu
	\end{displaymath}
	and
	\begin{align}
		\label{eq:6} &\nu = \lvert u \rvert^{p^*_\gamma}\dz + \sum_{j\in J} \nu_j \delta_{x_j},
		\\
		\label{eq:7} &\mu\ge \lvert \nabla_\gamma u\rvert^p\dz + S\sum_{j\in J}\nu_j^{1-p/N_\gamma} \delta_{x_j}.
	\end{align}
Then, passing to the limit in the expression of $I_\lambda(u_n)$ holds
\begin{equation}
	\label{eq:8}
	c = \frac{1}{p} \int_{\Omega}\, \dmu - \frac{1}{p^*_\gamma} \int_{\Omega}\,\dnu - \frac{\lambda}{p} \int_{\Omega}\lvert u\rvert^p\,\dz.
\end{equation}
Now, given $\varphi\in C^1(\overline{\Omega})$, testing \eqref{eq:5} with the function $u\varphi$
\begin{equation}
	\label{eq:9}
	\int_{\Omega}\left( u T \nabla_\gamma\varphi + \varphi T\nabla_\gamma u\right)\,\dz - \int_{\Omega} \varphi\lvert u\rvert^{p^*_\gamma}\,\dz - \lambda \int_{\Omega} \varphi \lvert u \rvert^p\,\dz = 0,
\end{equation}
moreover, $\braket{I'_\lambda(u_n), \varphi u_n} \to 0 $ as $n\to \infty$, that is
\begin{displaymath}
	\int_{\Omega} u T \nabla_\gamma\varphi\,\dz + \int_{\Omega}\varphi\,\dmu - \int_\Omega\varphi\,\dnu - \lambda \int_{\Omega} \varphi \lvert u\rvert^p\,\dz = 0.
\end{displaymath}
This, combined with \eqref{eq:6} and \eqref{eq:9}, implies
\begin{equation}
	\label{eq:10}
	\int_{\Omega} \varphi\,\dmu = \int_{\Omega}\varphi T \nabla_\gamma u \,\dz + \sum_{j\in J}\nu_j \varphi(x_j).
\end{equation}

Arguing as in the proof of Theorem \ref{thm:concentration}, we pick a function $\varphi\in C^1(\overline{\Omega})$  such that $0\le\varphi\le 1,\, \varphi(0) = 1$ and $\operatorname{supp}\varphi= B(0,1)$. For each~$\varepsilon>0$ we apply  equation \eqref{eq:10} with $\varphi_j = \varphi(\frac{x-x_j}{\varepsilon})$, where $j$ is fixed in $J$. This, combined with \eqref{eq:7} gives
\begin{displaymath}
	\nu_j \ge \int_{\Omega} \varphi_j \left(\lvert \nabla_\gamma u \rvert^p - T \nabla_\gamma u\right)\,\dz + S \nu_j^{1-p/N_\gamma}.
\end{displaymath}
Letting $\varepsilon\to 0$ we get
\begin{displaymath}
	\nu_j \ge S^{N_\gamma/p},\quad\forall j\in J.
\end{displaymath}
As
\begin{displaymath}
	\int_{\Omega} T\nabla_\gamma u\,\dz = \int_{\Omega} \lvert u \rvert^{p^*_\gamma}\,\dz + \lambda \int_{\Omega}\lvert u \rvert^p\,\dz,
\end{displaymath}
combined with \eqref{eq:8} and \eqref{eq:10} considering $\varphi\equiv 1$ gives
\begin{displaymath}
	c = \left(\frac{1}{p} - \frac{1}{p^*_\gamma}\right) \sum_{j\in J} \nu_j + \left(\frac{1}{p} - \frac{1}{p^*_\gamma}\right)\int_{\Omega} \lvert u \rvert^{p^*_\gamma}\,\dz \ge \frac{1}{N_\gamma} S^{N_\gamma/p},
\end{displaymath}
which is in contradiction with the hypothesis $c<S^{N_\gamma}/p/N_\gamma$. We deduce that $J$ is empty and
\begin{displaymath}
\lim_{n	\to\infty}\int_{\Omega}\lvert u_n\rvert^{p^*_\gamma}\,\dz = \int_{\Omega}\lvert u\rvert^{p^*_\gamma}\,\dz
\end{displaymath}
and thus
\begin{equation}
\label{eq:11}
u_n\to u~\text{strongly in}~ L^{p^*_\gamma}(\Omega).
\end{equation}
		
A straightforward computation shows that the sequence $\lbrace A_p(u_n)\rbrace$ is a Cauchy sequence in $W_\gamma^{-1,p'}(\Omega)$. In fact $A_p(u_n) = I'_\lambda(u_n) +\lambda \lvert u_n \rvert^{p-2} u_n + \lvert u_n \rvert^{p^*_\gamma -2} u_n$ and the claim follows from $\eqref{eq:11}$ and the Palais-Smale condition.

Now, a direct application of \cite[Eq (2.2)]{simon} implies
\begin{displaymath}
	\langle A_p(u_n) - A_p(u_m), u_n - u_m 	\rangle \ge
\begin{cases}
	c_1 \norm{u_n - u_m}_{\gamma,p}^p &\hbox{if $p>2$},\\
	c_2 M^{p-2} \norm{u_n-u_m}_{\gamma,p}^2 &\hbox{if $1<p\le 2$},
\end{cases}
\end{displaymath}
where $c_1 = c_1(N,\gamma,p)$, $c_2 = c_2(N,\gamma,p,\Omega)$ and $M = \max\lbrace \norm{u_n}_{\gamma,p}, \norm{u_m}_{\gamma,p}\rbrace$.
So
\begin{displaymath}
	\norm{u_n - u_m}_{\gamma,p}\le
	\begin{cases}
		c_1^{\frac{1}{1-p}} \norm{A_p(u_n) - A_p(u_m)}_{W^{-1,p'}_\gamma(\Omega)}^{\frac{1}{p-1}}  &\hbox{if $p>2$},\\
		c_2^{-1} M^{2-p} \norm{A_p(u_n) - A_p(u_m)}_{W^{-1,p'}_\gamma(\Omega)} &\hbox{if $1<p\le 2$},
	\end{cases}
\end{displaymath}
so we deduce that $u_n \to u$ strongly in $\mathring{W}^{1,p}_\gamma(\Omega)$.
\end{proof}

	If $\lambda_{k+m}<\lambda_{k+m+1}$ then $i(\Psi^{\lambda_{k+m}}) = k+m$ by \eqref{eq:index}. We now construct a symmetric subset $A_0$ of $\Psi^{\lambda_{k+m}}$ with the same cohomological index.
 We need some further properties of the operator $A_p$ introduced in section \ref{sec:eigenvalues}.
	\begin{lemma}
		The operator $A_p$ is strictly monotone, i.e.,
		\begin{displaymath}
			\langle A_p(u) - A_p(v), u-v\rangle >0
		\end{displaymath}
		for all $u\ne v$ in $\mathring{W}^{1,p}_\gamma(\Omega).$
	\end{lemma}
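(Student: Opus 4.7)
The plan is to invoke the standard Simon-type pointwise inequality for the map $\xi \mapsto \lvert\xi\rvert^{p-2}\xi$ on $\mathbb{R}^N$, which is already cited in the preceding Palais-Smale lemma as \cite[Eq.~(2.2)]{simon}. The key fact I would use is that for every $p>1$ and every pair $\xi,\eta\in\mathbb{R}^N$ with $\xi\ne\eta$,
\begin{displaymath}
\bigl(\lvert\xi\rvert^{p-2}\xi - \lvert\eta\rvert^{p-2}\eta\bigr)\cdot(\xi-\eta) > 0,
\end{displaymath}
with the two quantitative lower bounds $c_1\lvert\xi-\eta\rvert^p$ (for $p\ge 2$) and $c_2\lvert\xi-\eta\rvert^2/(\lvert\xi\rvert+\lvert\eta\rvert)^{2-p}$ (for $1<p\le 2$), exactly as already invoked in the previous proof.

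With this at hand, I would apply the inequality pointwise almost everywhere in $\Omega$ to $\xi=\nabla_\gamma u(z)$, $\eta=\nabla_\gamma v(z)$. This shows that the integrand in
\begin{displaymath}
\langle A_p(u) - A_p(v),\, u-v\rangle = \int_\Omega \bigl(\lvert\nabla_\gamma u\rvert^{p-2}\nabla_\gamma u - \lvert\nabla_\gamma v\rvert^{p-2}\nabla_\gamma v\bigr)\cdot\nabla_\gamma(u-v)\,\dz
\end{displaymath}
is non-negative a.e., so the bracket is $\ge 0$.

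For the strict inequality when $u\ne v$ in $\mathring{W}^{1,p}_\gamma(\Omega)$: in the case $p\ge 2$ one immediately gets $\langle A_p(u)-A_p(v),u-v\rangle \ge c_1\lVert u-v\rVert_{\gamma,p}^p > 0$, since $\lVert\cdot\rVert_{\gamma,p}$ is a norm. In the case $1<p\le 2$, at least one of $u,v$ is nonzero (otherwise $u=v=0$), so $M:=\max\{\lVert u\rVert_{\gamma,p},\lVert v\rVert_{\gamma,p}\} > 0$ and the second Simon bound yields $\langle A_p(u)-A_p(v),u-v\rangle \ge c_2 M^{p-2}\lVert u-v\rVert_{\gamma,p}^2 > 0$. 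Alternatively, one can argue abstractly: if the bracket vanished, the pointwise strict inequality would force $\nabla_\gamma u=\nabla_\gamma v$ a.e., i.e.\ $\lVert u-v\rVert_{\gamma,p}=0$, contradicting $u\ne v$.

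There is no real obstacle here; the only thing to be careful about is distinguishing the two regimes $p\ge 2$ and $1<p\le 2$ in order to cover every $p>1$, which is handled by reusing the same two-case Simon inequality that was already employed for the Cauchy argument in Lemma \ref{lemma:palais-smale}.
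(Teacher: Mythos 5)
Your proof is correct, but it takes a genuinely different route from the paper. You argue pointwise: the Simon inequality for $\xi\mapsto\lvert\xi\rvert^{p-2}\xi$ makes the integrand in $\langle A_p(u)-A_p(v),u-v\rangle$ nonnegative a.e.\ and strictly positive wherever $\nabla_\gamma u\ne\nabla_\gamma v$, so vanishing of the bracket forces $\norm{u-v}_{\gamma,p}=0$, i.e.\ $u=v$ in the completion. The paper instead verifies the hypotheses of the abstract monotonicity criterion \cite[Lemma 6.3]{perera_p_laplacian}: it shows $\langle A_p(u),v\rangle\le\norm{u}_{\gamma,p}^{p-1}\norm{v}_{\gamma,p}$ with equality if and only if $\alpha u=\beta v$ for some $\alpha,\beta\ge 0$ not both zero, by tracking the equality cases in H\"older's and the Cauchy--Schwarz inequalities. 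Your approach is more elementary and self-contained, and (in the $p\ge 2$ case) immediately quantitative; its only delicate point is that the integrated bound $c_2M^{p-2}\norm{u-v}_{\gamma,p}^2$ for $1<p\le 2$ does not follow from the pointwise inequality by mere integration but needs an extra H\"older step with exponents $2/p$ and $2/(2-p)$ --- your non-quantitative fallback (strict positivity of the integrand) sidesteps this cleanly and is all that the lemma requires. The paper's approach buys consistency with the operator-theoretic framework of \cite{perera_p_laplacian} already used for properties (A$_1$)--(A$_4$) and for the eigenvalue construction, where the same H\"older-type estimate and its equality case reappear.
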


	\begin{proof}
		In order to apply \cite[Lemma 6.3]{perera_p_laplacian}, it suffices to show that	
		\begin{displaymath}
			\langle A_p(u),v\rangle \le \norm{u}_{\gamma,p}^{p-1}\norm{v}_{\gamma,p}
		\end{displaymath}
		for every $u,v\in \mathring{W}^{1,p}_\gamma(\Omega)$ and the equality holds if and only if $\alpha u = \beta v$ for some $\alpha \geq 0$, $\beta\ge 0$, not both zero.
		
		By H\"{o}lder's inequality,
		\begin{displaymath}
		\langle A_p(u),v\rangle = \int_{\Omega} \lvert \nabla_\gamma u \rvert^{p-2}\nabla_\gamma u \nabla_\gamma v\,\dz \le \int_{\Omega} \lvert \nabla_\gamma u \rvert^{p-1} \lvert \nabla_\gamma v \rvert \le \norm{u}_{\gamma,p}^{p-1}\norm{v}_{\gamma,p}.
		\end{displaymath}
		
	Clearly, equality holds throughout if and only if $\alpha u = \beta v$ for some $\alpha \geq 0$, $\beta\ge 0$, not both zero.
	
	Conversely, if $\langle A_p(u),v\rangle =\norm{u}_{\gamma,p}^{p-1}\norm{v}_{\gamma,p}$, equality holds in both inequalities. The equality in the H\"{o}lder's inequality gives
	\begin{displaymath}
	\alpha \lvert \nabla_\gamma u \rvert = \beta \lvert \nabla_\gamma v \rvert\quad\hbox{a.e.}
	\end{displaymath}
	for some $\alpha,\beta\ge 0$, not both zero, and the equality in the Schwartz inequality gives
	\begin{displaymath}
	\alpha \nabla_\gamma u = \beta \nabla_\gamma v\quad\hbox{a.e.,}
	\end{displaymath}
	so $\alpha u = \beta v$.
	\end{proof}

\begin{lemma}
	\label{lemma:1}
For each $w\in L^p(\Omega),$ the problem
\begin{displaymath}
\begin{cases}
-\Delta_\gamma^p u = \lvert w \rvert ^{p-2} w\quad&\hbox{in $\Omega$},\\
u=0\quad&\hbox{on $\partial\Omega$},
\end{cases}
\end{displaymath}
has a unique weak solution $u\in \mathring{W}^{1,p}_\gamma(\Omega).$ Moreover, the map $J\colon L^p(\Omega)\to \mathring{W}^{1,p}_\gamma(\Omega)$, $w\mapsto u$ is continuous.
\end{lemma}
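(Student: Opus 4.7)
For fixed $w\in L^p(\Omega)$, the functional $v\mapsto \int_\Omega |w|^{p-2}w\, v\,\dz$ defines an element $f_w\in W^{-1,p'}_\gamma(\Omega)$: by H\"older's inequality and the continuous Sobolev embedding $\mathring{W}^{1,p}_\gamma(\Omega)\hookrightarrow L^p(\Omega)$ one has $|\langle f_w,v\rangle|\le \|w\|_p^{p-1}\|v\|_p\le C\|w\|_p^{p-1}\|v\|_{\gamma,p}$. A weak solution is therefore an element $u\in \mathring{W}^{1,p}_\gamma(\Omega)$ satisfying $A_p(u)=f_w$. To produce one I would minimize
$$F(u)=\frac{1}{p}\|u\|_{\gamma,p}^p-\langle f_w,u\rangle,\qquad u\in \mathring{W}^{1,p}_\gamma(\Omega).$$
The estimate above makes $F$ coercive; the norm $\|\cdot\|_{\gamma,p}^p$ is weakly lower semicontinuous and $u\mapsto \langle f_w,u\rangle$ is weakly continuous, so $F$ is weakly lower semicontinuous. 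The direct method in the reflexive space $\mathring{W}^{1,p}_\gamma(\Omega)$ yields a minimizer whose Euler-Lagrange equation $A_p(u)=f_w$ is precisely the weak formulation. Uniqueness is immediate from the strict monotonicity of $A_p$ established in the preceding lemma.

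For the continuity of $J$, let $w_n\to w$ in $L^p(\Omega)$ and set $u_n=J(w_n)$, $u=J(w)$. Testing $A_p(u_n)=|w_n|^{p-2}w_n$ against $u_n$ and reusing the bound above gives $\|u_n\|_{\gamma,p}^p\le C\|w_n\|_p^{p-1}\|u_n\|_{\gamma,p}$, so $\{u_n\}$ is bounded in $\mathring{W}^{1,p}_\gamma(\Omega)$. Along a subsequence, $u_n\rightharpoonup \tilde u$ weakly in $\mathring{W}^{1,p}_\gamma(\Omega)$, and the compact embedding $\mathring{W}^{1,p}_\gamma(\Omega)\hookrightarrow L^p(\Omega)$ upgrades this to $u_n\to \tilde u$ strongly in $L^p(\Omega)$.

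The next step is to promote weak convergence in $\mathring{W}^{1,p}_\gamma(\Omega)$ to strong convergence by means of property $(A_4)$. The Nemytskii operator $w\mapsto |w|^{p-2}w$ is continuous from $L^p(\Omega)$ into $L^{p'}(\Omega)$, so $|w_n|^{p-2}w_n\to |w|^{p-2}w$ in $L^{p'}(\Omega)$; combined with $u_n-\tilde u\to 0$ in $L^p(\Omega)$ this yields
$$\langle A_p(u_n),u_n-\tilde u\rangle=\int_\Omega |w_n|^{p-2}w_n\,(u_n-\tilde u)\,\dz\to 0.$$
The $(S)$-property then forces $u_n\to \tilde u$ strongly in $\mathring{W}^{1,p}_\gamma(\Omega)$. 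Continuity of $A_p$ lets us pass to the limit in $A_p(u_n)=|w_n|^{p-2}w_n$ and obtain $A_p(\tilde u)=|w|^{p-2}w$, whence uniqueness identifies $\tilde u$ with $u$. Since every subsequence of $\{u_n\}$ admits a further subsequence converging strongly to the same limit $u$, the whole sequence converges. The only genuinely delicate point is the above invocation of $(A_4)$ together with the Nemytskii continuity; all remaining ingredients are direct-method bookkeeping.
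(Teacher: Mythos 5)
Your proposal is correct and follows essentially the same route as the paper: existence by the direct method, uniqueness by strict monotonicity of $A_p$, and continuity of $J$ via an a priori bound, the compact embedding into $L^p(\Omega)$, Nemytskii continuity, the $(S)$-property, and a subsequence argument. (Your a priori estimate $\|u_n\|_{\gamma,p}^p\le C\|w_n\|_p^{p-1}\|u_n\|_{\gamma,p}$ is in fact the cleaner form of the one in the paper.)
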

\begin{proof}
The existence of a solution follows from a straightforward minimization procedure, and uniqueness is immediate from the strict monotonicity of the operator $A_p$. Let $w_j\to w$ in $L^p(\Omega)$ and let $u_j = J(w_j)$, $u=J(w)$. By definition,
\begin{displaymath}
	\langle A_p(u_j), v \rangle = \int_\Omega \lvert w_j \rvert^{p-2}w_j v\,\dz \quad\hbox{for all $v\in \mathring{W}^{1,p}_\gamma(\Omega)$}.
\end{displaymath}
Testing with $v=u_j$ gives
\begin{displaymath}
\norm{u_j}_{\gamma,p}^p \le \norm{w_j}_p^{p-1} \norm{u_j}_p^p
\end{displaymath}
by H\"{o}lder's inequality, which together with the continuity of the embedding $\mathring{W}^{1,p}_\gamma(\Omega)\hookrightarrow L^p(\Omega)$ shows that $\lbrace u_j\rbrace$ is bounded in $\mathring{W}^{1,p}_\gamma(\Omega)$. By reflexivity, up to a subsequence $u_j \rightharpoonup u^*$ in $\mathring{W}^{1,p}_\gamma(\Omega)$ and strongly in $L^p(\Omega)$.

By the continuity of the Nemitskii operator,
\begin{displaymath}
	\lvert w_j\rvert ^{p-2} w_j \to \lvert w\rvert ^{p-2}w
\end{displaymath}
strongly in $L^{p'}(\Omega)$. Therefore $A_p (u_j) \to A_p(u)$, or
\begin{displaymath}
	\langle A_p(u_j), v \rangle = \int_\Omega \vert w_j \vert^{p-2} w_j v \, \dz \to \int_\Omega \vert w \vert^{p-2} w v \, \dz = \langle A_p(u),v \rangle
\end{displaymath}
for every $v \in \mathring{W}^{1,p}_\gamma(\Omega)$. Hence $J(w)=u$. Now,
\begin{displaymath}
	\langle A_p(u_j),u_j-u^* \rangle = \int_\Omega \vert w_j \vert^{p-2} w_j (u_j-u^*) \, \dz \to 0
\end{displaymath}
since $\lbrace \vert w_j\vert^{p-2} w_j \rbrace$ is bounded in $L^{p'}(\Omega)$ and $u_j \to u^*$ strongly in $L^p(\Omega)$. Since $A_p$ is of type (S), up to another subsequence $u_j \to u^*$ strongly in $\mathring{W}^{1,p}_\gamma(\Omega)$. It follows easily that $J(u^*)=w$, and by uniqueness $u^*=u$. A standard argument shows now that the whole sequence $\lbrace u_j \rbrace$ converges to $u$, since each subsequence converges to the same limit $u$.
\end{proof}

\begin{prop}
	\label{prop:compact}
If $\lambda_\ell < \lambda_{\ell+1}$, then $\Psi^{\lambda_\ell}$ has a compact symmetric subset $A_0$ with $i(A_0) = \ell$.
\end{prop}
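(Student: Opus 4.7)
The plan is to rely on the variational characterization of $\lambda_\ell$ together with the Palais--Smale condition for $\Psi$ restricted to the Finsler manifold $\mathcal{M}$ and the continuity/subadditivity properties of the cohomological index. Combining Proposition~\ref{prop:index} of section~\ref{sec:eigenvalues} with the hypothesis $\lambda_\ell<\lambda_{\ell+1}$ gives $i(\Psi^{\lambda_\ell})=\ell$, so by monotonicity any compact symmetric $A_0\subseteq\Psi^{\lambda_\ell}$ satisfies $i(A_0)\le\ell$; it therefore suffices to exhibit such an $A_0$ with $i(A_0)\ge\ell$.

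First, I would verify that $\Psi|_\mathcal{M}$ is $C^1$ and satisfies the Palais--Smale condition at every positive level. Given a PS sequence $\{u_n\}\subset\mathcal{M}$, boundedness is automatic and, up to a subsequence, $u_n\rightharpoonup u$ weakly in $\mathring{W}^{1,p}_\gamma(\Omega)$. The compact embedding $\mathring{W}^{1,p}_\gamma(\Omega)\hookrightarrow L^p(\Omega)$ yields strong $L^p$ convergence; combining this with the Lagrange multiplier identity produces $\langle A_p(u_n),u_n-u\rangle\to 0$, and property (A$_4$) then upgrades the convergence to strong convergence in $\mathring{W}^{1,p}_\gamma(\Omega)$. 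Lagrange multipliers identify the critical points of $\Psi|_\mathcal{M}$ at level $\mu$ with the $\mu$-eigenfunctions of $-\Delta_\gamma^p$ lying on $\mathcal{M}$, so the critical sets $K_1,\dots,K_\ell$ at levels $\lambda_1,\dots,\lambda_\ell$ are compact symmetric subsets of $\mathcal{M}$, with $K_j\subset\Psi^{\lambda_\ell}$ for every $j\le\ell$.

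Second, I would choose a minimizing sequence $M_n\in\mathcal{F}_\ell$ with $\sup_{u\in M_n}\Psi(u)\le \lambda_\ell+1/n$ and apply the symmetric deformation lemma, valid under (PS), iteratively across the critical levels $\lambda_1,\dots,\lambda_\ell$. At each step, the flow allows pushing $M_n$ out of a prescribed open symmetric neighborhood $U_j$ of $K_j$ into a strictly lower sublevel of $\Psi$; after the $\ell$-th iteration, the tracked portion of $\eta_n(M_n)$ that remains in $\bigcup_j\overline{U_j}\subset\Psi^{\lambda_\ell+1/n}$ retains index at least $\ell$ by an alternating application of subadditivity and the identity $i(\eta_n(M_n))=i(M_n)\ge\ell$, where the lower sublevel contributions are absorbed across the $\ell$ stages. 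Shrinking the neighborhoods $U_j$ and passing to a Hausdorff limit as $n\to\infty$, the continuity property $(i_4)$ together with the compactness of each $K_j$ yields a compact symmetric set $A_0\subset \Psi^{\lambda_\ell}$ with $i(A_0)\ge\ell$, and monotonicity closes the estimate.

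The main obstacle is the careful organization of the iterated deformation: a naive single-step subadditivity estimate against $\Psi^{\lambda_\ell-\delta}$ only recovers $i(\overline{\eta_n(M_n)\cap\bar U})\ge 1$, so one has to split the passage through each of the $\ell$ variational levels and use the strict gap $i(\mathcal{M}\setminus\Psi_{\lambda_{j+1}})=i(\Psi^{\lambda_j})$ from \eqref{eq:index} to gain exactly one unit of index per level; this is precisely the pattern of Perera's construction for the $p$-Laplacian, which transfers to our setting thanks to the abstract nature of the argument and the properties (A$_1$)--(A$_4$) of $A_p$ already verified.
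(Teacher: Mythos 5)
Your reduction to producing a compact symmetric $A_0\subset\Psi^{\lambda_\ell}$ with $i(A_0)\ge\ell$ is correct, and the (PS) verification for $\Psi|_{\mathcal{M}}$ via (A$_4$) is plausible, but the deformation construction at the heart of your argument has genuine gaps. First, the critical values of $\Psi|_{\mathcal{M}}$ in $[\lambda_1,\lambda_\ell+1/n]$ are not known to be only the variational eigenvalues $\lambda_1,\dots,\lambda_\ell$: for $p\ne 2$ the spectrum is not completely described (as recalled in Section \ref{sec:eigenvalues}), there may be non-variational eigenvalues, and the set of critical values need not be finite or isolated --- it may even accumulate at $\lambda_\ell$ from above, which already obstructs the first deformation of $M_n$ from $\Psi^{\lambda_\ell+1/n}$ towards $\Psi^{\lambda_\ell}$. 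Second, and more fundamentally, the index bookkeeping does not close: subadditivity bounds the index of a union from \emph{above}, and there is no lower bound for $i\bigl(\bigcup_j K_j\bigr)$ in terms of the $i(K_j)$ (a disjoint union of $\ell$ antipodal pairs has index $1$, not $\ell$). The ``one unit of index per level'' pattern you invoke is the standard device for proving \emph{multiplicity of critical points} when minimax values coincide; it does not produce a single compact set of index $\ell$ inside the sublevel. Third, ``passing to a Hausdorff limit'' of the deformed sets $\eta_n(M_n)$ in the infinite-dimensional manifold $\mathcal{M}$ is unjustified: such limits need not exist along any subsequence, and compactness of a limit does not follow. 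What you are sketching is essentially the (genuinely delicate) Degiovanni--Lancelotti route, and the hard steps are precisely the ones left vague.

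The paper avoids all of this with a soft argument using no deformation and no information on the critical set: it projects $\Psi^{\lambda_\ell}$ radially onto the $L^p$-sphere to obtain a set $A$ that is compact \emph{in $L^p(\Omega)$} by the compact embedding, then applies the odd continuous solution map $J\colon L^p(\Omega)\to\mathring{W}^{1,p}_\gamma(\Omega)$ of Lemma \ref{lemma:1} followed by radial projections. H\"{o}lder's inequality shows the image stays in $\Psi^{\lambda_\ell}$, compactness of the image in $\mathring{W}^{1,p}_\gamma(\Omega)$ is automatic since the map factors through the compact set $A$, and monotonicity of the index in both directions gives $i(A_0)=\ell$. To make your route rigorous you would need either to adopt this construction or to supply the full Degiovanni--Lancelotti machinery.
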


\begin{proof}
	Let
	\begin{displaymath}
	\pi_p(u) = \frac{u}{\norm{u}_p}, \quad u\in \mathring{W}^{1,p}_\gamma(\Omega)\setminus\lbrace 0\rbrace,
	\end{displaymath}
	be the radial projection onto $\mathcal{M}_p = \lbrace u\in \mathring{W}^{1,p}_\gamma(\Omega)\mid \norm{u}_p = 1\rbrace$, and let
	\begin{displaymath}
	A = \pi_p(\Psi^{\lambda_\ell}) = \lbrace w\in\mathcal{M}_p\mid \norm{u}_{\gamma,p}^p\le \lambda_\ell\rbrace,
	\end{displaymath}
	which is compact in $L^p(\Omega)$ since the embedding $\mathring{W}^{1,p}_\gamma(\Omega)\hookrightarrow L^p(\Omega)$ is compact.

	Then $i(A) = i(\Psi^{\lambda_\ell}) = \ell$ by (i$_2$) of Proposition \ref{prop:index} and \eqref{eq:index}. For $w\in A$, let $u = J(w)$, where $J$ is the map defined in Lemma \ref{lemma:1}, so
	\begin{displaymath}
	\langle A_p(u),v\rangle = \int_{\Omega} \lvert w \rvert^{p-2} w v\,\dz, \quad\forall v\in \mathring{W}^{1,p}_\gamma(\Omega).
	\end{displaymath}
	Testing with $v=u,w$ and using H\"{o}lder's inequality gives
	\begin{displaymath}
	\norm{u}_{\gamma,p}\le \norm{w}_p^{p-1}\norm{u}_p = \norm{u}_p, \quad 1 = \langle A_p(u), w\rangle \le \norm{u}_{\gamma,p}^{p-1}\norm{w}_{\gamma,p},
	\end{displaymath}
	so
	\begin{displaymath}
	\norm{\pi_p(u)}_{\gamma,p} = \frac{\norm{u}_{\gamma,p}}{\norm{u}_p} \le \norm{w}_{\gamma,p}
	\end{displaymath}
	and hence $\pi_p(u)\in A$.
	
	Let $\widetilde{J} = \pi_p\circ J$ and let $\widetilde{A} = \widetilde{J}(A)\subset A$. Since $\widetilde{J}$ is an odd continuous map from $L^p(\Omega)$ to $\mathring{W}^{1,p}_\gamma(\Omega)$ and $A$ is compact in $L^p(\Omega)$, then $\widetilde{A}$ is a compact set and $i(\widetilde{A}) = i(A) = \ell$ by (i$_2$) of Proposition \ref{prop:index}. Let
	\begin{displaymath}
	\pi(u) = \frac{u}{\norm{u}_{\gamma,p}}, \quad u\in \mathring{W}^{1,p}_\gamma(\Omega)\setminus\lbrace 0\rbrace,
	\end{displaymath}
	be the radial projection onto $\mathcal{M} = \lbrace u\in \mathring{W}^{1,p}_\gamma(\Omega)\mid \norm{u}_{\gamma,p} = 1\rbrace$ and let $A_0 = \pi(\widetilde{A})$. Then $A_0\subset \Psi^{\lambda_\ell}$ is compact and $i(A_0) = i(\widetilde{A}) = \ell$.
\end{proof}

Now we are ready to prove Theorem \ref{thm:main}.
\begin{proof}[Proof of Theorem \ref{thm:main}]
	 We only give the proof of (2). The proof of (1) is similar and simpler. By Lemma \ref{lemma:palais-smale}, $I_\lambda$ satisfies the (PS)$_c$ conditions for all $c<\frac{S^{N_\gamma/p}}{N_\gamma}$, so we apply Theorem \ref{thm:critical_point} with $b = \frac{S^{N_\gamma/p}}{N_\gamma}$. By Proposition \ref{prop:compact}, $\Psi^{\lambda_{k+m}}$ has a compact symmetric subset $A_0$ with
	 \begin{displaymath}
	 i(A_0) = k+m.
	 \end{displaymath}
	 We take $B_0 = \Psi_{\lambda_{k+1}}$, so that
	 \begin{displaymath}
	 i(S_1\setminus B_0) = k
	 \end{displaymath}
	 by \eqref{eq:index}. Let $R>r>0$ and let $A,B$ and $X$ be as in Theorem \ref{thm:critical_point}. For $u\in B_0$,
	 \begin{displaymath}
	 I_\lambda(ru) \ge \frac{r^p}{p}\left(1 - \frac{\lambda}{\lambda_{k+1}}\right) - \frac{r^{p^*_\gamma}}{p^*_\gamma S^{p^*_\gamma/p}}
	 \end{displaymath}
	 by the definition \eqref{eq:sobconst} of the constant $S$. Since $\lambda<\lambda_{k+1}$ and $p^*_\gamma>p$, it follows that $\inf I_\lambda(B)>0$ if $r$ sufficiently small.
	 For $u\in A_0\subset \Psi^{\lambda_{k+1}}$,
	 \begin{displaymath}
	 I_\lambda(Ru) \le \frac{R^p}{p} \left(1- \frac{\lambda}{\lambda_{k+1}}\right) - \frac{R^{p^*_\gamma}}{p^*_\gamma \lvert \Omega \rvert^{p^*_\gamma/N} \lambda_{k+1}^{p^*_\gamma/p}}
	 \end{displaymath}
	 by H\"{o}lder's inequality, so there exists $R>r$ such that $I_\lambda\le 0$ on $A$. For $u\in X$,
	\begin{align*}
		I_\lambda(u)&\le \frac{\lambda_{k+1} - \lambda}{p} \int_{\Omega} \lvert u\rvert^p\,\dz - \frac{1}{p^*_\gamma \lvert \Omega \rvert^{p^*_\gamma/N_\gamma}}\left(\int_{\Omega}\lvert u \rvert^p\,\dz \right)^{p^*_\gamma/p}\\
		&\le \sup_{\rho\ge 0} \left[\frac{(\lambda_{k+1} - \lambda)\rho}{p} - \frac{\rho^{p^*_\gamma/p}}{p^*_\gamma \lvert \Omega \rvert^{p^*_\gamma/N_\gamma}}\right] = \frac{\lvert \Omega \rvert}{N_\gamma} (\lambda_{k+1} -\lambda)^{N_\gamma/p}.
	\end{align*}
	So,
	\begin{displaymath}
	\sup I_\lambda(X) \le \frac{\lvert \Omega \rvert}{N_\gamma} (\lambda_{k+1} - \lambda)^{N_\gamma/p} < \frac{S^{N_\gamma/p}}{N_\gamma}
	\end{displaymath}
	by \eqref{eq:lambdak}. Theorem \ref{thm:critical_point} now gives $m$ distinct pairs of (nontrivial) critical points $\pm u_j^\lambda, \,j=1,\dots, m,$ of $I_\lambda$ such that
	\begin{displaymath}
	0<I_\lambda(u_j^\lambda)\le \frac{\lvert \Omega \rvert}{N_\gamma} (\lambda_{k+1} - \lambda)^{N_\gamma/p} \to 0 ~\hbox{as $\lambda \nearrow \lambda_{k+1}$.}
	\end{displaymath}
	Then
	\begin{displaymath}
		\norm{u_j^\lambda}_{p^*_\gamma}^{p^*_\gamma} = N_\gamma \left[I_\lambda(u_j^\lambda) - \frac{1}{p} \braket{I'_\lambda(u_j^\lambda), u_j^\lambda}\right] = N_\gamma I_\lambda (u_j^\lambda) \to 0
	\end{displaymath}
	as $\lambda\nearrow \lambda_{k+1},$ and hence $u_j^\lambda\to 0$ in $L^p(\Omega)$ also by H\"{o}lder's inequality, so
	\begin{displaymath}
		\norm{u_j^\lambda}_{\gamma,p}^p = p I_\lambda(u_j^\lambda) + \lambda \norm{u_j^\lambda}_p^p + \frac{p}{p^*_\gamma} \norm{u_j^\lambda}_{p^*_\gamma}^{p^*_\gamma} \to 0,
	\end{displaymath}
	as $\lambda\nearrow\lambda_{k+1},$ this concludes the proof.	
\end{proof}

\bibliography{p_Grushin_multiplicity}{}
\bibliographystyle{plain}

\bigskip

Paolo Malanchini, Dipartimento di Matematica e Applicazioni, Universit\`a degli Studi di Milano Bicocca, via R. Cozzi 55, I-20125 Milano, Italy.

\medskip

Giovanni Molica Bisci, Department of Human Sciences and Promotion of Quality of Life,
San Raffaele University,
via di Val Cannuta 247,
I-00166 Roma, Italy.

\medskip

Simone Secchi, Dipartimento di Matematica e Applicazioni, Universit\`a degli Studi di Milano Bicocca, via R. Cozzi 55, I-20125 Milano, Italy.
\end{document}